\newcounter{todocounter}
\DeclareDocumentCommand\addreference{g}{\stepcounter{todocounter}\todo[color = blue!30]{\thetodocounter. Add reference\IfNoValueF{#1}{: #1}}\xspace}
\DeclareDocumentCommand\checkthis{g}{\stepcounter{todocounter}\todo[color = red!50]{\thetodocounter. Check this\IfNoValueF{#1}{: #1}}\xspace}
\DeclareDocumentCommand\fixthis{g}{\stepcounter{todocounter}\todo[color = orange!50]{\thetodocounter. Fix this\IfNoValueF{#1}{: #1}}\xspace}
\DeclareDocumentCommand\expand{g}{\stepcounter{todocounter}\todo[color = green!50]{\thetodocounter. Expand\IfNoValueF{#1}{: #1}}\xspace}
\declaretheoremstyle[
  spaceabove = 3pt,
  spacebelow = 3pt,
  bodyfont = \itshape,
]{first}
\declaretheoremstyle[
  spaceabove = 3pt,
  spacebelow = 3pt,
]{second}
\declaretheorem[numberwithin=section, style=first]{theorem}
\declaretheorem[sibling=theorem, style=first]{lemma}
\declaretheorem[sibling=theorem, style=first]{proposition}
\declaretheorem[sibling=theorem, style=second]{example}
\declaretheorem[sibling=theorem, style=second]{remark}
\declaretheorem[sibling=theorem, style=second]{definition}
\declaretheorem[sibling=theorem, style=second]{construction}
\crefname{construction}{Construction}{Constructions}
\declaretheorem[numberwithin=section, style=first, title=Theorem]{alphatheorem}
\declaretheorem[sibling=alphatheorem, style=first, title=Conjecture]{alphaconjecture}
\declaretheorem[sibling=alphatheorem, style=first, title=Proposition]{alphaproposition}
\crefname{alphatheorem}{Theorem}{Theorems}
\crefname{alphaconjecture}{Conjecture}{Conjectures}
\crefname{alphacorollary}{Corollary}{Corollaries}
\crefname{alphaproposition}{Proposition}{Propositions}
\crefname{alphaassumption}{Assumption}{Assumptions}
\crefname{figure}{Figure}{Figures}
\def\gitfootnote{\gdef\@thefnmark{}\@footnotetext}
\mathchardef\mhyphen="2D
\newcommand\dash{\nobreakdash-\hspace{0pt}}
\DeclareMathOperator\coh{coh}
\DeclareMathOperator\derived{\mathbf{D}}
\DeclareMathOperator\Ext{Ext}
\DeclareMathOperator\fano{F}
\DeclareMathOperator\FL{FL}
\DeclareMathOperator\Gr{Gr}
\DeclareMathOperator\HH{H}
\DeclareMathOperator\HHHH{HH}
\DeclareMathOperator\IC{IC}
\DeclareMathOperator\Jac{Jac}
\DeclareMathOperator\Kzero{K_0}
\DeclareMathOperator\OGr{OGr}
\DeclareMathOperator\rk{rk}
\DeclareMathOperator\Sym{Sym}
\newcommand\bounded{\ensuremath{\mathrm{b}}}
\newcommand\Cat{\ensuremath{\mathrm{Cat}}}
\newcommand\MHM{\ensuremath{\mathrm{MHM}}}
\newcommand\clifford{\ensuremath{\mathcal{C}\ell}}
\newcommand\lefschetz{\ensuremath{\mathbb{L}}}
\newcommand\rs{\ensuremath{\mathrm{reg\,ss}}}
\newcommand\Var{\ensuremath{\mathrm{Var}}}
\newcommand\field{\mathbf{k}}
\newcommand\hodge{\ensuremath{\mathrm{Hdg}}}
\newcommand\hochschild{\ensuremath{\mathrm{Hoch}}}
\newcommand\rH{\ensuremath{\mathrm{H}}}
\newcommand\FS{\ensuremath{\mathrm{FS}}}
\newcommand\gauss[2]{\ensuremath{\binom{#1}{#2}_{\lefschetz}}}
\newcommand\fonarev[1]{\ensuremath{\smash{\widetilde{\Sym}}^{#1}\mathcal{C}}}
\DeclareFontFamily{U}{mathx}{}
\DeclareFontShape{U}{mathx}{m}{n}{<-> mathx10}{}
\DeclareSymbolFont{mathx}{U}{mathx}{m}{n}
\DeclareMathAccent{\widehat}{0}{mathx}{"70}
\DeclareMathAccent{\widecheck}{0}{mathx}{"71}
\title{On decompositions for Fano schemes \\ of intersections of two quadrics}
\author{
  Pieter Belmans \and
  Jishnu Bose \and
  Sarah Frei \and
  Benjamin Gould \and
  James Hotchkiss \and
  Alicia Lamarche \and
  Jack Petok \and
  Cristian Rodriguez Avila \and
  Saket Shah
}
\begin{document}
\maketitle

\begin{abstract}
  We propose conjectural semiorthogonal decompositions for
  Fano schemes of linear subspaces on intersections of two quadrics,
  in terms of symmetric powers of the associated hyperelliptic (resp.~stacky) curve.
  When the intersection is odd-dimensional,
  we moreover conjecture an identity in the Grothendieck ring of varieties
  and other motivic contexts.
  The evidence for these conjectures is given by
  upgrading recent results of Chen--Vilonen--Xue,
  to obtain formulae for the Hodge numbers of these Fano schemes.
  This allows us to numerically verify the conjecture
  in the hyperelliptic case,
  and establish a combinatorial identity as evidence for the stacky case.
\end{abstract}

\section{Introduction}
Let~$Q_1\cap Q_2$ be the intersection of two quadrics in~$\mathbb{P}^{2g+1}$,
resp.~$\mathbb{P}^{2g}$,
for~$g\geq 2$,
where we work over a field~$\field$
which is algebraically closed of characteristic~zero.
When topological methods are used,
we take~$\field=\mathbb{C}$.
It is well-known that there is an associated hyperelliptic (resp.~stacky) curve
which controls much of the geometry of~$Q_1\cap Q_2$,
as recalled in \cref{subsection:curves}.
We denote this curve by~$C$ (resp.~$\mathcal{C}$).

Our starting point
is the semiorthogonal decomposition for
the derived category of~$Q_1\cap Q_2$
due to Bondal--Orlov \cite[Theorem~2.9]{alg-geom/9506012} (for the hyperelliptic case)
resp.~Kuznetsov \cite[Corollary~5.7]{MR2419925} (for both cases)
which reads
\begin{equation}
  \label{equation:bondal-orlov-kuznetsov-sod}
  \derived^\bounded(Q_1\cap Q_2)
  =
  \begin{cases}
    \langle\derived^\bounded(C),\mathcal{O}_{Q_1\cap Q_2},\ldots,\mathcal{O}_{Q_1\cap Q_2}(2g-3)\rangle & \dim Q_1\cap Q_2=2g-1 \\
    \langle\derived^\bounded(\mathcal{C}),\mathcal{O}_{Q_1\cap Q_2},\ldots,\mathcal{O}_{Q_1\cap Q_2}(2g-4)\rangle & \dim Q_1\cap Q_2=2g-2.
  \end{cases}
\end{equation}

\paragraph{Fano schemes of intersections of two quadrics}
We will generalize from~$Q_1\cap Q_2$ and consider the Fano schemes of~$k$\dash dimensional linear subspaces on~$Q_1\cap Q_2$,
as introduced in \cite{MR0569043},
and previously studied in \cite{reid-thesis}.
They will be denoted~$\fano_k(Q_1\cap Q_2)$,
where~$k=0$ refers to the Fano scheme of points on~$Q_1\cap Q_2$,
i.e., $Q_1\cap Q_2$ itself,
and~$k=g-1$ is the highest dimension for which they are non-empty.
These Fano schemes are smooth projective varieties of expected dimension~$(k+1)(2g+1-k)-2\binom{k+2}{k}$
resp.~$(k+1)(2g-k)-2\binom{k+2}{k}$.
We recall some more of their geometric properties in \cref{subsection:fano-schemes}.

For~$Q_1\cap Q_2\subset\mathbb{P}^{2g+1}$,
the Fano scheme~$\fano_{g-2}(Q_1\cap Q_2)$
is the moduli space~$\mathrm{M}_C(2,\mathcal{L})$
of stable rank-two bundles with fixed determinant of odd degree \cite[Theorem~1]{MR0429897},
the Fano scheme~$\fano_{g-1}(Q_1\cap Q_2)$
is the Jacobian~$\Jac(C)$ \cite[Theorem~4.8]{reid-thesis},
and for all~$k=0,\ldots,g-1$
there is an interpretation as a moduli space of orthogonal bundles \cite[Theorem~3]{MR0653952}.

The goal of this article is to propose conjectural semiorthogonal decompositions for~$\derived^\bounded(\fano_k(Q_1\cap Q_2))$,
generalizing from the case \eqref{equation:bondal-orlov-kuznetsov-sod} for~$k=0$,
and give evidence for these conjectures.

\paragraph{Hyperelliptic case}
We first consider the \emph{odd-dimensional} case,
i.e.,~$Q_1\cap Q_2\subset\mathbb{P}^{2g+1}$,
so that the associated curve~$C$ is a hyperelliptic curve.
In what follows,
$\Sym^iC$ denotes the~$i$th symmetric power of~$C$,
which is a smooth projective variety of dimension~$i$.
\begin{alphaconjecture}
  \label{conjecture:hyperelliptic}
  Let~$Q_1\cap Q_2$ be a smooth intersection of quadrics in~$\mathbb{P}^{2g+1}$,
  and let~$C$ be the associated hyperelliptic curve.
  For all~$k=0,\ldots,g-2$ there exists a semiorthogonal decomposition
  \begin{equation}
    \label{equation:hyperelliptic-conjecture}
    \derived^\bounded(\fano_k(Q_1\cap Q_2))
    =
    \left\langle
    \text{$\binom{2g-4-k-i}{k+1-i} + 2\binom{2g-4-k-i}{k-i}$ copies of $\derived^\bounded(\Sym^iC)$, for $i=0,\ldots,k+1$}
    \right\rangle.
  \end{equation}
\end{alphaconjecture}

The indecomposability of~$\derived^\bounded(\Sym^iC)$ for~$i\leq g-1$ follows from \cite[Theorem~1.9]{2107.09564v3}.
For~$k=g-1$ we have~$\fano_{g-1}(Q_1\cap Q_2)\cong\Jac(C)$ \cite[Theorem~4.8]{reid-thesis},
whose derived category is also indecomposable.

Our method for giving evidence for this conjecture
uses a (conjectural) identity in the Grothendieck ring of varieties,
and it is for this identity that we will give evidence.
This identity also allows one to conjecture analogous identities,
e.g., in the category of Chow motives,
but we will not explicitly spell these out.

To phrase this conjectural identity for~$[\fano_k(Q_1\cap Q_2)]$
in~$\Kzero(\Var/\field)$ as in \cref{conjecture:hyperelliptic-refined}
we will write
\begin{equation}
  \gauss{n}{m}
  \colonequals
  \frac{(1-\lefschetz^n)(1-\lefschetz^{n-1})\cdots(1-\lefschetz^{n-m+1})}{(1-\lefschetz)(1-\lefschetz^2)\cdots(1-\lefschetz^m)}
\end{equation}
for the Gaussian binomial coefficient
in the variable~$\lefschetz=[\mathbb{A}^1]\in\Kzero(\Var/\field)$.
We define the following element
\begin{equation}
  \label{equation:M}
  \begin{aligned}
    M_{g,k,i}
    &\colonequals
    \lefschetz^{i(g-k-1)}
    \Biggl(
      \gauss{2g-k-i}{k+1-i}
      - \left( \lefschetz^{g-k-1} + \lefschetz^{g+2k-3i} \right)\gauss{2g-k-i-4}{k-i} \\
      &\qquad\qquad\qquad
      - \left( \lefschetz^{g-k} + \lefschetz^{g-i} + \lefschetz^{g+k-2i} + \lefschetz^{3g-3k-4} + \lefschetz^{3g-2k-4-i} + \lefschetz^{3g-k-2i-4} \right)\gauss{2g-k-i-4}{k-i-1} \\
      &\qquad\qquad\qquad
      - \left( \lefschetz^{3(g-k-1)} + \lefschetz^{3(g-k-1)+1} + \lefschetz^{3g-2k-i-3} + \lefschetz^{3g-2k-i-2} \right)\gauss{2g-k-i-4}{k-i-2} \\
      &\qquad\qquad\qquad
      - \lefschetz^{4(g-k)-2}\gauss{2g-k-i-4}{k-i-3}
    \Biggr)
  \end{aligned}
\end{equation}
in the subring~$\mathbb{Z}[\lefschetz]\subset\Kzero(\Var/\field)$.
The following conjecture is phrased for the Grothendieck ring of varieties,
but as explained before,
it can verbatim be phrased in other contexts,
such as an isomorphism in the category of Chow motives.
\begin{alphaconjecture}
  \label{conjecture:hyperelliptic-refined}
  For~$Q_1\cap Q_2$ and~$C$ as in \cref{conjecture:hyperelliptic}
  and~$k=0,\ldots,g-2$
  the expression~$M_{g,k,i}$ in \eqref{equation:M} is effective,
  i.e.,~$M_{g,k,i}\in\mathbb{Z}_{\geq 0}[\lefschetz]$,
  and we have the identity
  \begin{equation}
    \label{equation:hyperelliptic-refined}
    [\fano_k(Q_1\cap Q_2)]
    =
    \sum_{i=0}^{k+1}
    M_{g,k,i}
    [\Sym^iC]
  \end{equation}
  in~$\Kzero(\Var/\field)$.
\end{alphaconjecture}

The evidence for \cref{conjecture:hyperelliptic,conjecture:hyperelliptic-refined}
is discussed in \cref{section:hyperelliptic-decomposition}.
We will explain how to compute the E-polynomial of~$\fano_k(Q_1\cap Q_2)$:
our main result in this direction is the following theorem,
generalizing \cite[Theorem~1.1]{MR3689749} to include
the Hodge structures on both sides of the isomorphism in the statement of loc.~cit.
\begin{alphatheorem}
  \label{theorem:hodge-decomposition}
  For~$Q_1\cap Q_2$ and~$C$ as in \cref{conjecture:hyperelliptic}
  and~$k=0,\ldots,g-2$,
  we will write
  \begin{equation}
    d\colonequals\dim\fano_k(Q_1\cap Q_2)=(k+1)(2g-2k-1).
  \end{equation}
  For all~$i=-d,\ldots,d$
  there exists an isomorphism of Hodge structures
  \begin{equation}
    \HH^{d-i}(\fano_k(Q_1\cap Q_2),\mathbb{Q})
    \cong
    \bigoplus_{j=d-k-1}^g\HH^{g-j}(\Jac C,\mathbb{Q})\otimes\mathbb{Q}(-)^{\oplus N(g-k,j;d-i)}
    \label{equation:isomorphism-hodge-modules}
  \end{equation}
  where~$\mathbb{Q}(-)$ denotes the appropriate Tate twist into weight~$d-i$,
  and the multiplicity~$N(a,b;c)$ is given by the coefficient of~$q^c$ in
  \begin{equation}
    q^{-(b-a+1)(2a-1)}(1-q^{4b})\frac{\prod_{\ell=b-a+2}^{a+b-2}(1-q^{2\ell})}{\prod_{\ell=1}^{2a-2} (1-q^{2\ell})}.
  \end{equation}
\end{alphatheorem}
We do not see an immediate way to turn the multiplicities in \eqref{equation:isomorphism-hodge-modules}
into the multiplicities as they arise in \eqref{equation:hyperelliptic-refined},
a non-trivialc recombination and decomposition step
seems necessary to translate between the two decompositions,
which is why we resort to giving computational evidence in \cref{subsection:hyperelliptic-evidence}.

Having obtained this identity,
we can conjecturally lift the identity in~$\mathbb{Z}[x,y]$
along the E-polynomial motivic measure~$\mu_{\mathrm{E}}$ from \eqref{equation:hodge-polynomial-measure}
to an identity in~$\Kzero(\Var/\field)$,
which leads to \cref{conjecture:hyperelliptic-refined}.

The identity \eqref{equation:hyperelliptic-refined} after applying~$\mu_{\mathrm{E}}$
can be verified computationally for arbitrary~$g$ and~$k$ using \cite{hodge-diamond-cutter},
see \cref{subsection:hyperelliptic-evidence}.
The motivic measure~$\mu_{\derived^\bounded}$ from \eqref{equation:derived-category-measure}
then gives an identity in~$\Kzero(\Cat/\field)$ by setting~$\lefschetz=1$ as in \cref{lemma:multiplicity-reduction}
which thus leads to \cref{conjecture:hyperelliptic}.

\paragraph{Stacky case}
We next consider the \emph{even-dimensional} case,
i.e.,~$Q_1\cap Q_2\subset\mathbb{P}^{2g}$,
so that the associated curve~$\mathcal{C}$ is a stacky curve.
In \cite[\S2]{MR4673249}
an ad hoc definition of a ``stacky symmetric power''~$\Sym^i\mathcal{C}$
is given for the specific stacky curve~$\mathcal{C}$ which appears for~$Q_1\cap Q_2$.
We recall its construction and properties in \cref{subsection:stacky-symmetric-power}.
Unlike the hyperelliptic case,
the derived category of~$\Sym^i\mathcal{C}$ admits a full exceptional collection,
and is thus ``maximally decomposable''.

The analogue of \cref{conjecture:hyperelliptic} in the stacky case is the following.
\begin{alphaconjecture}
  \label{conjecture:stacky}
  Let~$Q_1\cap Q_2$ be a smooth intersection of quadrics in~$\mathbb{P}^{2g}$,
  and let~$\mathcal{C}$ be the associated stacky curve.
  For all~$k=0,\ldots,g-2$ there exists a semiorthogonal decomposition
  \begin{equation}
    \label{equation:stacky-conjecture}
    \derived^\bounded(\fano_k(Q_1\cap Q_2))
    =
    \left\langle
    \text{$\binom{2g-3-k-i}{k+1-i}$ copies of $\derived^\bounded(\Sym^i\mathcal{C})$, for $i=0,\ldots,k+1$}
    \right\rangle.
  \end{equation}
\end{alphaconjecture}
For~$k=g-1$ we have that~$\fano_{g-1}(Q_1\cap Q_2)$
is a reduced and finite scheme of cardinality~$2^{2g}$ \cite[Theorem~3.8]{reid-thesis}.

We present the same kind of evidence as for \cref{conjecture:hyperelliptic,conjecture:hyperelliptic-refined},
by giving the analogue of \cref{theorem:hodge-decomposition}.
The following is (almost) \cite[Theorem~1.1]{MR4216588}.
\begin{alphaproposition}
  \label{proposition:hodge-tate}
  For~$Q_1\cap Q_2$ as in \cref{conjecture:stacky},
  and~$k=0,\ldots,g-2$
  we will write
  \begin{equation}
    d\colonequals\dim\fano_k(Q_1\cap Q_2)=(k+1)(2g-2k-2).
  \end{equation}
  The cohomology of~$\fano_k(Q_1\cap Q_2)$ is concentrated in even degrees,
  and for all~$i=0,\ldots,d$
  there exists an isomorphism of Hodge structures
  \begin{equation}
    \HH^{2i}(\fano_k(Q_1\cap Q_2),\mathbb{Q})
    \cong
    \bigoplus_{j=0}^{k+1}\mathbb{Q}^{\oplus M(i+1,j,k)\binom{2g+1}{j}}
  \end{equation}
  where~$M(a,b;c)$ is the coefficient of~$q^{c-b(n-a)}$ in~$g_{a-b,2g-a-b}(q)$
  with
  \begin{equation}
    g_{d,e}(q)\colonequals\frac{\prod_{\ell=e-d+1}^e(1-q^\ell)}{\prod_{\ell=1}^d(1-q^\ell)}
    =\binom{e}{d}_q.
  \end{equation}
  In particular, the Hodge structure on~$\HH^{2i}(\fano_k(Q_1\cap Q_2),\mathbb{Q})$
  is of Hodge--Tate type.
\end{alphaproposition}
The new ingredient in this statement is that it is of Hodge--Tate type,
the rest is already proven in \cite{MR4216588}.
The Hodge--Tateness is claimed on \cite[page~517]{MR3689749},
but no proof seems to be given in \cite{MR4216588,MR3689749}.

The analogue of \cref{conjecture:hyperelliptic-refined}
expresses~$[\fano_k(Q_1\cap Q_2)]$ entirely as a sum of powers of the Lefschetz motive,
and can be read off from \cref{proposition:hodge-tate}.

\paragraph{Known cases and relations to other works}
Some cases of \cref{conjecture:hyperelliptic,conjecture:stacky} are already known,
and one still open case has been phrased before in the literature:

\begin{itemize}
  \item If~$k=0$
    then \cref{conjecture:hyperelliptic,conjecture:stacky} reduce to
    the known semiorthogonal decompositions \eqref{equation:bondal-orlov-kuznetsov-sod},
    as discussed before.

  \item If~$k=0$
    then \cref{conjecture:hyperelliptic-refined} is known to hold
    up to a common factor of~$\lefschetz$,
    as explained in \cref{lemma:k=0-calculation}.
    For~$g=2$ it is a classical result that it holds without the factor of~$\lefschetz$,
    see \cref{remark:g=2-3-folds}.

  \item If~$k=0,\ldots,g-2$ and~$\dim Q_1\cap Q_2=2g-1$ is odd,
    then \cite[Theorem~2.11]{MR3764066} shows that there is
    a natural fully faithful functor~$\derived^\bounded(C)\hookrightarrow\derived^\bounded(\fano_k(Q_1\cap Q_2))$.
    This is an important first step in settling \cref{conjecture:hyperelliptic}.

  \item If~$k=g-2$ and~$\dim Q_1\cap Q_2=2g-1$ is odd,
    then \cref{conjecture:hyperelliptic} reduces
    to what is referred to as the \emph{BGMN conjecture}.
    In this case~$\fano_{g-2}(Q_1\cap Q_2)\cong\mathrm{M}_C(2,\mathcal{L})$
    is the moduli space of stable rank-two bundles on~$C$ with fixed determinant of odd degree \cite[Theorem~1]{MR0429897}.

    It is in fact possible to let~$C$ be any (and not just hyperelliptic) curve in the statement of the BMGN conjecture.
    The conjecture (together with evidence for it)
    was phrased in \cite{MR4557892,1806.11101v2}.
    Partial progress was obtained in \cite{MR3954042,2106.04872v1},
    whereas the combination of \cite{2108.11951v5,2304.01825v2} settles it for all~$C$.

  \item If~$k=g-2$ and~$\dim Q_1\cap Q_2=2g-2$ is even,
    then \cref{conjecture:stacky} was previously stated in \cite[\S2]{MR4673249}.
    In this case~$\fano_{g-2}(Q_1\cap Q_2)\cong\mathrm{M}_{\mathcal{C}}(2,\mathcal{O}_{\mathcal{C}})$
    is the moduli space of stable quasiparabolic rank-two bundles on~$\mathbb{P}^1$
    -- the coarse space of $\mathcal{C}$ --
    with weights~$1/2$ at the marked points \cite[Theorem]{MR3369361}.
\end{itemize}

\paragraph{Acknowledgements}
We would like to thank the AMS for organising the Mathematics Research Community 2023
``Derived Categories, Arithmetic and Geometry''
where this work was started.
This material is thus based upon work supported by the National Science Foundation under Grant Number DMS 1916439.

We thank Chris Kuo, Kari Vilonen and Calvin Yost-Wolff for interesting conversations. We are particularly grateful to Brad Dirks for his help with the Fourier--Laplace transform.

P.B.~was partially supported by the Luxembourg National Research Fund (FNR--17113194). \\
S.F.~and~J.P.~were partially supported by the Hausdorff Research Institute for Mathematics,
funded by the Deutsche Forschungsgemeinschaft (DFG, German Research Foundation) under Germany's Excellence Strategy – EXC-2047/1 – 390685813.\\
A.L.~was supported by an NSF Postdoctoral Fellowship (\#2103271).

\section{Preliminaries}
Let~$n\geq 2$, and let~$V$ be an~$(n+1)$-dimensional vector space.
We let~$Q_1, Q_2 \subset \mathbb{P}(V)$ be quadric hypersurfaces
such that~$Q_1\cap Q_2\subset\mathbb{P}(V)=\mathbb{P}^n$ is a smooth~$(n-2)$-dimensional complete intersection.
Then by \cite[Proposition~2.1]{reid-thesis} the pencil spanned by~$Q_1$ and~$Q_2$
contains precisely~$n+1$ quadrics of corank~1,
and all others are smooth.
We can thus choose generators~$Q_1$ and~$Q_2$ of the pencil
which are smooth,
and which in an appropriate basis can be written as
\begin{equation}
  \begin{aligned}
    Q_1&\colon x_0^2+\ldots+x_n^2=0 \\
    Q_2&\colon a_0x_0^2+\ldots+a_nx_n^2=0 \\
  \end{aligned}
\end{equation}
for some scalars~$a_i\in\field$, with~$a_i\neq a_j$ for~$i\neq j$.

In what follows we will take~$g\geq 2$, and let~$n+1=2g+2$ or~$2g+1$.
Following \cref{subsection:curves} we will refer to the former as the \emph{hyperelliptic} situation,
and to the latter as the \emph{stacky} situation.

\subsection{Fano schemes of linear subspaces on intersections of two quadrics}
\label{subsection:fano-schemes}
In this paper we are concerned with Fano schemes of linear subspaces.
We will briefly recall their relevant properties.
For~$i=1,2$ we write
\begin{equation}
  \mathrm{OGr}_i(k+1,V)\colonequals\{U \in \mathrm{Gr}(k+1,V)\mid \mathbb{P}(U)\subset Q_i\},
\end{equation}
which is the Grassmannian of isotropic linear subspaces of dimension~$k+1$ in~$V$.
Then for~$0\leq k \leq g-1$ we define
\begin{equation}
  \fano_k(Q_1\cap Q_2)\colonequals \mathrm{OGr}_1(k+1,V)\cap \mathrm{OGr}_2(k+1,V)
\end{equation}
parametrizing projective~$k$-planes contained in~$Q_1\cap Q_2$.
This is the \emph{Fano scheme of linear subspaces on $Q_1\cap Q_2$} as introduced in \cite{MR0569043}.
Alternatively, we can define $\fano_k(Q_1 \cap Q_2)$ as the zero locus of the global section
of~$(\Sym^2 \mathcal{U}^\vee)^{\oplus{2}}$
corresponding to~$Q_1$ and~$Q_2$,
where $\mathcal{U}$ is the tautological subbundle on~$\Gr(k+1,V)$.

We collect some useful facts about $\fano_k(Q_1\cap Q_2)$ here.
We will consider the two possibilities for~$\dim V$ separately,
as the geometry of the Fano schemes is very different.

\paragraph{The hyperelliptic case}
First, suppose $\dim V = 2g+2$, so that $\dim Q_1\cap Q_2 = 2g-1$.
The following combines \cite[Theorem~2.6]{reid-thesis}
and \cite[Remarque~3.2.1, Corollaire~3.5]{MR1654757}.

\begin{proposition}
  \label{proposition:fano-scheme-even-dimension}
  For~$k=0,\ldots,g-1$
  the Fano scheme~$\fano_k(Q_1\cap Q_2)$ is a smooth projective variety of dimension~$(k+1)(2g-2k-1)$.
  Its Picard rank is~1,
  except when~$k=g-1$.
  Its canonical bundle is~$-(2g-2k-2)H$, where~$H$ is the ample generator of the Picard group if~$k\leq g-2$,
  and it is trivial when~$k=g-1$.
\end{proposition}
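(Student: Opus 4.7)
I would view $\fano_k(Q_1\cap Q_2)$ as the zero locus inside~$X\colonequals\Gr(k+1,V)$ of the global section of~$E\colonequals(\Sym^2\calU^\vee)^{\oplus 2}$ determined by the pair of quadratic forms defining~$Q_1$ and~$Q_2$, as noted immediately after the definition of~$\fano_k$. Since $\dim X=(k+1)(2g+1-k)$ and~$\rk E=(k+1)(k+2)$, the expected dimension of the zero locus is~$(k+1)(2g-2k-1)$, matching the asserted dimension.

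The key step is then to show that this section is transverse at every point~$[\Lambda]$ of~$\fano_k(Q_1\cap Q_2)$, so that the zero locus is smooth of the expected dimension. Identifying the tangent space to~$X$ at~$[\Lambda]$ with~$\Hom(\Lambda,V/\Lambda)$ and the fibre of~$E$ with~$(\Sym^2\Lambda^\vee)^{\oplus 2}$, the derivative of the section sends a first-order deformation~$v$ of~$\Lambda$ to the pair of quadratic forms~$\Lambda\to\field$ given by~$x\mapsto b_i(x,v(x))$, where~$b_i$ is the polarization of the form defining~$Q_i$. The smoothness of~$Q_1\cap Q_2$, together with the normal forms recalled in the preliminaries and the fact that no member of the pencil has corank greater than one, allows one to verify directly that this derivative is surjective at every~$[\Lambda]$.

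For the Picard rank I would invoke Sommese's Lefschetz theorem for zero loci of ample vector bundles: since~$\calU^\vee$ is ample on the Grassmannian and both symmetric powers and direct sums preserve ampleness, $E$ is ample, and restriction~$\Pic(X)\to\Pic(\fano_k(Q_1\cap Q_2))$ is an isomorphism provided the zero locus has dimension at least~$3$. The function~$(k+1)(2g-2k-1)$ attains its minimum over~$k\in\{0,\ldots,g-2\}$ at the endpoints, giving~$\min(2g-1,3(g-1))\geq 3$ for~$g\geq 2$, so the hypothesis is satisfied and~$\Pic(\fano_k(Q_1\cap Q_2))$ is generated by the restriction~$H$ of the Plücker class.

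Finally, the canonical bundle follows by adjunction. Routine determinant calculations yield~$K_X=\calO(-(2g+2))$ and~$\det E=\calO(2(k+2))$, so $K_{\fano_k(Q_1\cap Q_2)}=\calO(2(k+2)-(2g+2))|_{\fano_k}=-(2g-2k-2)H$ as claimed. For~$k=g-1$ this exponent vanishes, giving the triviality of the canonical bundle; the higher Picard rank in that case is explained by the isomorphism~$\fano_{g-1}(Q_1\cap Q_2)\cong\Jac(C)$ recalled in the introduction, which also independently accounts for~$K$ being trivial since~$\Jac(C)$ is an abelian variety. The main obstacle I anticipate is verifying transversality at \emph{every} point rather than merely at a generic one, since this is where the smoothness hypothesis and the explicit structure of the pencil are genuinely needed; once that step is in place, the remaining ingredients (ampleness of~$E$, the Lefschetz theorem, and adjunction) are essentially formal.
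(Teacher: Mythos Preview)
The paper does not give its own proof of this proposition; it simply records the result as a combination of \cite[Theorem~2.6]{reid-thesis} (for smoothness and dimension) and \cite[Remarque~3.2.1, Corollaire~3.5]{MR1654757} (for the Picard group and canonical bundle). Your approach---realizing $\fano_k$ as a zero locus in the Grassmannian, checking transversality, and applying adjunction---is exactly the one underlying those references, and the dimension count, the adjunction computation for the canonical class, and the treatment of the case $k=g-1$ are all correct.

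There is, however, a genuine gap in the Picard-rank step: the bundle $\calU^\vee$ is \emph{not} ample on $\Gr(k+1,V)$ once $k\geq 1$. It is globally generated and nef, but on any line in the locus of $(k+1)$-planes containing a fixed $k$-plane $\Lambda_0$, the tautological subbundle $\calU$ contains the trivial subbundle $\Lambda_0\otimes\calO$, so $\calU^\vee$ surjects onto the trivial bundle $\Lambda_0^\vee\otimes\calO$. Hence $\calU^\vee$, $\Sym^2\calU^\vee$, and $E$ all fail to be ample, and Sommese's Lefschetz theorem for ample bundles does not apply. Debarre--Manivel obtain the Picard-rank statement by a more delicate Lefschetz-type argument adapted to globally generated bundles (controlling the fibers of the morphism that $E$ induces, or equivalently working via the incidence correspondence over the parameter space of pairs of quadrics); this refinement is what you would need to invoke or reproduce in place of the direct appeal to ampleness.
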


When~$k=0,\ldots,g-2$ it is thus a Fano variety of index~$2g-2k-2$, and when $k=g-1$~the Fano scheme is Calabi--Yau.
Indeed, in this latter case there is an isomorphism~$\fano_{g-1}(Q_1\cap Q_2)\cong\Jac(C)$ \cite[Theorem~4.8]{reid-thesis},
where $C$ is the hyperelliptic curve which we will introduce in \cref{subsection:curves}.
For~$k=g-2$ the Fano scheme is isomorphic to the moduli space~$\mathrm{M}_C(2,\mathcal{L})$
of stable rank-two bundles with fixed determinant of odd degree \cite[Theorem~1]{MR0429897}.
Finally, for all~$k=0,\ldots,g-1$
the Fano scheme admits an interpretation as a moduli space of orthogonal bundles \cite[Theorem~3]{MR0653952}.

\paragraph{The stacky case}
Next, suppose $\dim V = 2g+1$, so that $\dim Q_1\cap Q_2 = 2g-2$.
The following combines \cite[Theorem~2.6]{reid-thesis}
and \cite[Remarque~3.2.1, Corollaire~3.5]{MR1654757}.

\begin{proposition}
  For~$k=0,\ldots,g-2$,
  the Fano scheme~$\fano_k(Q_1\cap Q_2)$ is a smooth projective variety of dimension~$(k+1)(2g-2k-2)$.
  Its Picard rank is~1,
  except when~$k=g-2$.
  Its canonical bundle is~$-(2g-2k-3)H$,
  where~$H$ is the ample generator of the Picard group if~$k\leq g-3$,
  resp.~an ample class if~$k=g-2$.

  For~$k=g-1$,
  the Fano scheme~$\fano_{g-1}(Q_1\cap Q_2)$ is a finite reduced scheme of length~$4^g$.
\end{proposition}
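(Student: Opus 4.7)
The plan is to mirror the proof of \cref{proposition:fano-scheme-even-dimension} almost verbatim, making only the bookkeeping change from $\dim V = 2g+2$ to $\dim V = 2g+1$ and accounting for the shift in the range of $k$ for which the Picard-rank argument succeeds. The first step is to realize $\fano_k(Q_1\cap Q_2)$ as the zero locus of a regular section of the vector bundle $\mathcal{E} \colonequals (\Sym^2 \mathcal{U}^\vee)^{\oplus 2}$ of rank $(k+1)(k+2)$ on $\Gr(k+1,V)$, exactly as at the start of \cref{subsection:fano-schemes}. Smoothness, together with the dimension identity $(k+1)(2g-2k-2) = \dim \Gr(k+1,V) - \rk \mathcal{E}$, is an immediate consequence of \cite[Theorem~2.6]{reid-thesis}.

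For the Picard rank I would invoke the Lefschetz--Sommese style statement \cite[Remarque~3.2.1, Corollaire~3.5]{MR1654757} for zero loci of globally generated vector bundles on Grassmannians. Its numerical hypotheses are satisfied in the present (stacky) setting precisely for $k \leq g-3$, yielding the restriction isomorphism $\Pic(\Gr(k+1,V)) \cong \Pic(\fano_k(Q_1\cap Q_2))$ and hence Picard rank one with ample generator $H$ (the Pl\"ucker class). The remaining range $k=g-2$ lies outside those hypotheses, and I would simply observe that higher Picard rank does occur there, consistently with the identification $\fano_{g-2}(Q_1\cap Q_2) \cong \moduli_{\mathcal{C}}(2,\mathcal{O}_{\mathcal{C}})$ from \cite[Theorem]{MR3369361}.

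The canonical bundle then drops out of adjunction. Using $K_{\Gr(k+1,V)} = \mathcal{O}_{\Gr}(-(2g+1))$ together with $\det \Sym^2 \mathcal{U}^\vee = (\det \mathcal{U}^\vee)^{k+2}$, one finds $c_1(\mathcal{E}) = 2(k+2)H$, and so
\begin{equation*}
  K_{\fano_k(Q_1\cap Q_2)} = (K_{\Gr(k+1,V)} \otimes \det\mathcal{E})|_{\fano_k(Q_1\cap Q_2)} = -(2g-2k-3)\,H|_{\fano_k(Q_1\cap Q_2)},
\end{equation*}
which is the ample generator of $\Pic$ when $k \leq g-3$ and merely an ample class when $k=g-2$. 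For the remaining case $k=g-1$ the expected dimension vanishes, and I would cite \cite[Theorem~3.8]{reid-thesis} for the reducedness and the length count $4^g$. The only real point of care throughout is to check that the numerical hypotheses of \cite[Corollaire~3.5]{MR1654757} truly single out $k \leq g-3$ and fail at $k=g-2$ in the stacky setting, since this is the source of the exception appearing in the Picard-rank statement.
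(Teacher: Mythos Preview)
Your proposal is correct and follows exactly the approach the paper takes: the paper gives no proof at all, merely prefacing the proposition with ``The following combines \cite[Theorem~2.6]{reid-thesis} and \cite[Remarque~3.2.1, Corollaire~3.5]{MR1654757}.'' You have spelled out the adjunction computation and the dimension count in more detail than the paper, but the sources and the logical structure are identical.
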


When~$k=0,\ldots,g-2$ it is thus a Fano variety of index~$2g-2k-3$.

\subsection{Curves associated to intersections of two quadrics}
\label{subsection:curves}
We continue with~$V$ a vector space of dimension~$2g+2$, resp.~$2g+1$,
and take $Q_1, Q_2 \subset \mathbb{P}(V)$ as before:
two smooth quadrics spanning a pencil of quadrics
whose base locus is again smooth.
This pencil will moreover degenerate to quadric cones over~$2g+2$ or $2g+1$ points in the pencil.
To~$Q_1\cap Q_2$ we want to associate a curve which controls the geometry of~$Q_1\cap Q_2$.
Depending on the parity of~$\dim V$ we distinguish two cases.

\paragraph{Hyperelliptic case}
If $\dim V=2g+2$, we consider the hyperelliptic curve $\pi\colon C \to \mathbb{P}^1$
branched over the~$2g+2$ distinguished points in~$\mathbb{P}^1$.
This is the hyperelliptic curve associated to $Q_1\cap Q_2$,
and throughout, we will refer to the case~$\dim V=2g+2$ as the ``hyperelliptic case''.

More intrinsically, this hyperelliptic curve can be described as
the moduli space of families of maximal isotropic subspaces
contained in the fibers of the pencil generated by $Q_1$ and $Q_2$.
Explicitly, if $\mathcal{Q}\subset \mathbb P^1\times \mathbb P^{2g+1}$ is the universal family of quadrics in the pencil,
then $C$ arises in the Stein factorization
\begin{equation}
  \mathcal{F}_{g}(\mathcal{Q}/\mathbb P^1) \to C \to \mathbb P^1,
\end{equation}
where~$\mathcal{F}_{g}(\mathcal{Q}/\mathbb P^1)$ is the relative Fano scheme of projective $g$-planes
in the fibers of $\mathcal{Q} \to \mathbb P^1$.

\paragraph{Stacky case}
If~$\dim V=2g+1$, we consider the root stack~$\mathcal{C}\to\mathbb{P}^1$,
defined by~$\mathbb{Z}/2\mathbb{Z}$-stabilizers at the~$2g+1$ distinguished points in~$\mathbb{P}^1$.
This is the stacky curve associated to~$Q_1\cap Q_2$,
and  throughout, we will refer to the case~$\dim V=2g+1$ as the ``stacky case''.

The original way in which the stacky curve arises from~$Q_1\cap Q_2$ is less direct
than the moduli-theoretic incarnation explained above.
It is shown in \cite[Theorem~5.4]{MR2419925} that
the \emph{homological projective dual} of~$\mathbb{P}^n$
with respect to the Veronese embedding given by~$\mathcal{O}_{\mathbb{P}(V)}(2)$
is the noncommutative variety~$(\mathbb{P}\mathrm{H}^0(\mathbb{P}(V),\mathcal{O}_{\mathbb{P}(V)}(2)),\clifford_0)$
given by the sheaf of even parts of Clifford algebras.
Thus by the formalism of homological projective duality \cite{MR2354207}
the semiorthogonal decomposition of~$Q_1\cap Q_2$
has as its interesting component
the derived category of the restriction of~$\mathcal{C}$ to the projective line
corresponding to the pencil spanned by~$Q_1$ and~$Q_2$.

By \cite[Corollary~3.16]{MR2419925} there exists an equivalence of abelian categories
\begin{equation}
  \coh\mathcal{C}\cong\coh(\mathbb{P}^1,\clifford_0|_{\mathbb{P}^1})
\end{equation}
through the central reduction of \S3.6~in~op.~cit.,
thus~$\mathcal{C}$ only arises a posteriori as a geometric object attached to~$Q_1\cap Q_2$.

In the hyperelliptic case,
a different central reduction is used to go from~$(\mathbb{P}^1,\clifford_0|_{\mathbb{P}^1})$ to~$C$.
Thus, in the proof of \eqref{equation:bondal-orlov-kuznetsov-sod} following \cite{MR2419925},
the hyperelliptic curve~$C$ appears in a more indirect way
than in the original~\cite{alg-geom/9506012}.

\subsection{Grothendieck rings and motivic measures}
In order to give evidence for \cref{conjecture:hyperelliptic,conjecture:hyperelliptic-refined,conjecture:stacky}
we will consider the Grothendieck ring of varieties (resp.~categories)
and motivic measures out of it.
We will prove various identities \emph{after} having applied a certain motivic measure,
and conjecture that the same identities hold \emph{before} taking the measure.

Throughout this section, we let $\field$ denote a field.
For simplicity we will assume from the beginning that it is
algebraically closed field and of characteristic zero,
but this is not needed for some of the results we will use.

\begin{definition}
  The \emph{Grothendieck ring of varieties} $\Kzero(\mathrm{Var}/\field)$ is
  the free abelian group generated by isomorphism classes of varieties of finite type over $\field$,
  modulo the cut-and-paste relation $[X] = [Y] + [X\setminus Y]$
  whenever~$Y\subset X$ is a closed subvariety.
  The ring structure on $\Kzero(\mathrm{Var}/\field)$ is defined by the cartesian product of varieties.
\end{definition}

We set $\lefschetz = [\mathbb{A}^1_\field] \in \Kzero(\mathrm{Var}/\field)$,
so that~$[\mathbb{A}^n_\field] = \lefschetz^n$,
and~$[\mathbb{P}^n] = 1 + \lefschetz + \cdots + \lefschetz^n$.

Relations among the classes in $\Kzero(\mathrm{Var}/\field)$ are often detected
via homomorphisms of $\Kzero(\mathrm{Var}/\field)$ into other rings.
We will consider the following diagram of motivic measures
\begin{equation}
  \label{equation:diagram}
  \begin{tikzcd}
    \mathbb{Z}[\mathbb{L}] \arrow[d, "\mathbb{L}\mapsto 1"] \arrow[r, hook] & \Kzero(\mathrm{Var}/\field) \arrow[r, "\mu_\hodge"] \arrow[d, "\mu_{\derived^\bounded}"] & \Kzero(\mathrm{HS}) \arrow[r, "\mu_{\mathrm{E}}"] & \mathbb{Z}[x,y] \arrow[d] \arrow[r]                   & \mathbb{Z}[z] \arrow[d, "\chi"] \\
    \mathbb{Z} \arrow[r, hook]                                              & \Kzero(\mathrm{Cat}/\field) \arrow[rr, "\mu_\hochschild"]                                  &                                                   & \mathbb{Z}[t,t^{-1}] \arrow[r, dashed, swap, "(-)_0"] & \mathbb{Z},
  \end{tikzcd}
\end{equation}
where the subdiagram on the full arrows commutes,
and the subdiagram on the dashed arrows commutes on the image of~$\mathbb{Z}[\mathbb{L}]$,
see \cref{remark:comment-on-euler-characteristic}.
We will now recall the constructions of these measures,
and explain how they are related.
Whenever Hodge structures are involved, we will tacitly assume that~$\field=\mathbb{C}$.

Every variety over $\field$ is birational, via a resolution of singularities, to a nonsingular, projective variety, by our choice of~$\field$.
It follows that the natural map from the free abelian group generated by smooth, projective varieties over $\field$
to $\Kzero(\mathrm{Var}/\field)$ is surjective.
A theorem of Bittner \cite[Theorem~3.1]{MR2059227} asserts that the kernel of this map
is generated by~$[\emptyset] = 0$, and the relations~$([\mathrm{Bl}_Z(X)] - [E]) - ([X] - [Z])$,
where~$E$ is the exceptional divisor of the blowup~$\mathrm{Bl}_Z(X) \rightarrow X$ of~$X$ in~$Z$,
where both $X$ and $Z$ are smooth and projective.

We apply Bittner's presentation to construct the first motivic measure that appears in our conjecture.

\begin{example}[Hodge motivic measure]
  Consider the category~$\mathrm{HS}$ of polarizable pure rational Hodge structures.
  Using Bittner's presentation we have a well-defined motivic measure~$\mu_\hodge$,
  landing in the Grothendieck ring~$\Kzero(\mathrm{HS})$,
  by setting
  \begin{equation}
    [X]\mapsto[\HH^\bullet(X_\mathbb{C},\mathbb{Q})]=\bigoplus_{i=0}^{2\dim X}[\HH^i(X_\mathbb{C},\mathbb{Q})]
  \end{equation}
  whenever~$X$ is a smooth, projective variety.
\end{example}

To complete the top row of \eqref{equation:diagram} we discuss the following motivic measures.

\begin{example}[E-polynomial]
  \label{example:E-polynomial}
  The Hodge motivic measure
  factors the E-polynomial motivic measure~$\mu_{\mathrm{E}}\colon\Kzero(\mathrm{Var}/\field)\to\mathbb{Z}[x,y]$
  which is defined as
  \begin{equation}
    \label{equation:hodge-polynomial-measure}
    [X]\mapsto\sum_{p,q=0}^{\dim X}(-1)^{p+q}\mathrm{h}^{p,q}(X)x^py^q
  \end{equation}
  whenever~$X$ is a smooth, projective variety.
  By abuse of notation we will denote by~$\mu_{\mathrm{E}}$
  both the motivic measure just defined,
  and the morphism~$\Kzero(\mathrm{HS})\to\mathbb{Z}[x,y]$ defined by setting
  \begin{equation}
    [H]\mapsto\sum_{p,q\in\mathbb{Z}}(-1)^{p+q}\mathrm{h}^{p,q}(H)x^py^q
  \end{equation}
  whenever~$H$ is a pure Hodge structure,
  through which it factors.
\end{example}

\begin{example}[Betti numbers and Euler characteristic]
  By the Hodge decomposition we can consider the ring morphism
  \begin{equation}
    \mathbb{Z}[x,y]\mapsto\mathbb{Z}[z]:x,y\mapsto z.
  \end{equation}
  The composition with~$\mu_{\mathrm{E}}$ gives the Poincar\'e--Betti polynomial
  whenever~$X$ is a smooth projective variety.
  By further setting~$z=1$ we obtain the Euler characteristic of~$X$.
\end{example}

On the second row of \eqref{equation:diagram} we consider the Grothendieck ring of categories,
as introduced in \cite{MR2051435}.

\begin{definition}
  The \emph{Grothendieck ring of categories} $\Kzero(\mathrm{Cat}/\field)$ is
  the free abelian group generated by quasi-equivalence classes of smooth and proper pretriangulated dg categories
  modulo the ``cut-and-paste relation'' $[\mathcal{A}] = [\mathcal{B}] + [\mathcal{C}]$,
  whenever~$\mathrm{H}^0(\mathcal{A})=\langle\mathrm{H}^0(\mathcal{B}),\mathrm{H}^0(\mathcal{C})\rangle$
  is a semiorthogonal decomposition.
  The ring structure on $\Kzero(\mathrm{Cat}/\field)$ is defined by the tensor product of dg categories.
\end{definition}

\begin{example}[Derived categories]
  Using Bittner's presentation,
  and Orlov's blowup formula,
  we have a well-defined motivic measure~$\mu_{\derived^\bounded}$,
  landing in~$\Kzero(\mathrm{Cat}/\field)$,
  by setting
  \begin{equation}
    \label{equation:derived-category-measure}
    [X]\mapsto[\derived^\bounded(X)],
  \end{equation}
  whenever~$X$ is a smooth, projective variety.
  The left square in \eqref{equation:diagram} commutes.
\end{example}

\begin{example}[Hochschild homology polynomial]
  The motivic measure~$\mu_{\derived^\bounded}$
  factors the motivic measure~$\mu_\hochschild\colon\Kzero(\mathrm{Var}/\field)\to\mathbb{Z}[t,t^{-1}]$,
  which is defined as
  \begin{equation}
    [X]\mapsto\sum_{i=-\dim X}^{\dim X}\dim_\field\HHHH_i(X)t^i
  \end{equation}
  whenever~$X$ is a smooth, projective variety.

  The Hochschild homology of~$X$ is defined
  as~$\HHHH_i(X)\colonequals\Ext_{X\times X}^i(\Delta_*\mathcal{O}_X,\Delta_*\omega_X[\dim X])$.
  Hochschild homology is an additive invariant \cite[Theorem~1.5(c)]{MR1667558},
  sending semiorthogonal decompositions to direct sums,
  thus explaining why it factors through~$\Kzero(\mathrm{Cat}/\field)$.
  Again, by abuse of notation we also denote by~$\mu_\hochschild$
  the morphism~$\Kzero(\mathrm{Cat}/\field) \to \mathbb{Z}[t,t^{-1}]$.

  By the Hochschild--Kostant--Rosenberg decomposition,
  e.g., as in \cite{MR1390671},
  we have
  \begin{equation}
    \label{equation:hochschild-kostant-rosenberg}
    \HHHH_i(X)\cong\bigoplus_{q-p=i}\HH^p(X,\Omega_X^q),
  \end{equation}
  so the measure~$\mu_\hochschild$
  (or more precisely, the composition~$\mu_{\derived^\bounded}\circ\mu_\hochschild$)
  factors through~$\mu_\hodge$ and~$\mu_{\mathrm{E}}$,
  i.e.,
  the middle square in \eqref{equation:diagram} commutes.
\end{example}

\begin{remark}
  \label{remark:comment-on-euler-characteristic}
  We can consider both the Euler characteristic of~$X$,
  and the Euler characteristic of the Hochschild homology of~$X$.
  By \eqref{equation:hochschild-kostant-rosenberg},
  if all Hodge numbers~$\mathrm{h}^{p,q}(X)$ where~$p+q$ is odd vanish,
  then both Euler characteristics are computed by summing \emph{positive} integers,
  and thus they will agree.
  This is in particular the case if~$[X]\in\mathbb{Z}[\lefschetz]$.
\end{remark}

\begin{remark}
  \label{remark:topological-k-theory}
  It is possible to modify \eqref{equation:diagram} and add an analogue of~$\Kzero(\mathrm{HS})$ to its bottom row.
  To do so,
  we have to replace~$\Kzero(\mathrm{Cat}/\field)$
  by its subring~$\Kzero(\mathrm{geom\,Cat}/\field)$ given by geometric dg categories,
  i.e.,
  those dg categories which arise as admissible subcategories of derived categories of smooth projective varieties.
  The question whether every smooth and proper dg category is geometric,
  and thus whether the subring is all of~$\Kzero(\mathrm{Cat}/\field)$
  is raised in \cite{MR3545926}
  and remains open.

  By \cite[Proposition~5.4(1)]{MR4406785},
  the topological K-theory of a geometric dg
  comes equipped with a Hodge structure,
  for which
  \begin{equation}
    \operatorname{gr}^p(\operatorname{K}_i^{\mathrm{top}}(\mathcal{A})\otimes\mathbb{C})
    \cong
    \HHHH_{i+2p}(\mathcal{A}).
  \end{equation}
  By Bott periodicity, it will suffice for us to consider~$i=0,1$.
  This makes it possible to define the ring morphism
  \begin{equation}
    \Kzero(\mathrm{geom\,Cat}/\field)\to\Kzero(\mathbb{Z}/2\mathbb{Z}\mhyphen\mathrm{HS})
    :
    [\mathcal{A}]\mapsto[\operatorname{K}_0^{\mathrm{top}}(\mathcal{A})\oplus\operatorname{K}_1^{\mathrm{top}}(\mathcal{A})]
  \end{equation}
  landing in the Grothendieck ring of~$\mathbb{Z}/2\mathbb{Z}$-graded Hodge structures,
  and which factors the morphism~$\mu_\hochschild$ in the bottom row of \eqref{equation:diagram}.
  We will not use this in what follows.
\end{remark}

\section{Decomposing Fano schemes in the hyperelliptic case}
\label{section:hyperelliptic-decomposition}
Let us first recall a precursor to the web of conjectures and results regarding~$\fano_k(Q_1\cap Q_2)$.

By \cite[Theorem 1]{MR0429897}
there is an isomorphism~$\fano_{g-2}(Q_1\cap Q_2)\cong\mathrm{M}_C(2,\mathcal L)$,
where~$\mathrm{M}_C(2,\mathcal L)$ is the moduli space of stable rank-2 bundles
with fixed determinant~$\mathcal L$ of odd degree.
In \cite[Conjecture A]{MR4557892},
the authors conjectured and gave evidence for
a semiorthogonal decomposition for~$\derived^\bounded(\mathrm{M}_C(2,\mathcal L))$
in terms of~$C$ and its symmetric powers,
similar to \cref{conjecture:hyperelliptic}.
This decomposition has since been established:
\begin{itemize}
  \item \cite[Theorem 1.1]{2108.11951v5} gives the decomposition along with, possibly, a phantom component,
  \item \cite[Theorem~1.1]{2304.01825v2} shows the vanishing of the phantom component.
\end{itemize}
In fact, the conjecture and its proof work for an arbitrary smooth curve of genus~$g$,
not just hyperelliptic curves.
For~$k\neq g-2,g-1$ the Fano scheme does not have an interpretation
which also makes sense for non-hyperelliptic curves.

Thus, the context for \cref{conjecture:hyperelliptic}
is that it is an interpolation between semiorthogonal decompositions for~$Q_1\cap Q_2$ as in \eqref{equation:bondal-orlov-kuznetsov-sod} for~$k=0$
and~$\mathrm{M}_C(2,\mathcal{L})$ as in \cite[Theorem~1.1]{2304.01825v2} for~$k=g-2$.

As evidence for the (no longer conjectural) decomposition of~$\derived^\bounded(\fano_{g-2}(Q_1\cap Q_2))$,
it is shown in \cite[Theorem C]{MR4557892} that in~$\Kzero(\mathrm{Var}/\field)$
we have the identity
\begin{equation}
  \label{equation:BGM-K0}
  [\fano_{g-2}(Q_1\cap Q_2)]=\mathbb{L}^{g-1}[\Sym^{g-1}C]+\sum_{i=1}^{g-2}(\mathbb{L}^i +\mathbb{L}^{3g-3-2i})[\Sym^iC]+T,
\end{equation}
for some class~$T$ such that~$(1+\mathbb L)\cdot T=0$,
which is conjectured in op.~cit.~to vanish.
Thus, \cref{conjecture:hyperelliptic-refined} is the (conjectural) analogue of \eqref{equation:BGM-K0},
for different values of~$k$.

In this section we will prove that a decomposition result from \cite{MR4216588}
holds in the more refined setting involving Hodge structures,
and use this as the main evidence for \cref{conjecture:hyperelliptic-refined}.
As explained in \cref{subsection:hyperelliptic-evidence}
we obtain evidence for \cref{conjecture:hyperelliptic} from \cref{conjecture:hyperelliptic-refined}
by setting~$\lefschetz=1$.

\subsection{Preliminaries on mixed Hodge modules}
\label{subsection:mhm}
We recall some basics on mixed Hodge modules
which are necessary for the proof of \cref{theorem:hodge-decomposition}.
The reader who is familiar with the theory
can skip ahead to \cref{subsection:upgrade}.

\paragraph{Hodge modules}
Let $X$ be a smooth variety over $\mathbb{C}$.
We freely use the language of mixed Hodge modules on~$X$ due to M.~Saito;
the basic formal features of the theory are summarized, for instance, in~\cite{MR1042805}.
Fortunately, we only require a few general properties of the theory, which we now review.
\begin{enumerate}
  \item There is an abelian category~$\MHM(X)$ of mixed Hodge modules on~$X$,
    whose derived category~$\derived^\bounded(\MHM(X))$
    enriches the derived category of constructible sheaves
    $\derived^\bounded_{\mathrm{c}}(X, \mathbb{Q})$,
    in the sense that there is a \emph{rationalization} functor
    \begin{equation}
      \mathrm{rat}_X\colon\derived^\bounded(\MHM(X)) \to \derived^\bounded_{\mathrm{c}}(X, \mathbb{Q}).
    \end{equation}
    Given a morphism $f\colon X \to Y$,
    the familiar operations on derived categories of constructible sheaves
    (e.g., $f_*, f^*, f^!, f_!$) lift to $\derived^\bounded(\MHM(X))$.
  \item The abelian category $\MHM(\mathrm{pt})$
    is equivalent to the category of~$\mathbb{Q}$-mixed Hodge structures.
  \item Each mixed Hodge module has an underlying D-module.
    Moreover, there is a forgetful functor from~$\derived^\bounded(\MHM(X))$
    to~$\derived^\bounded_{\mathrm{coh}}(\mathrm{D}_X)$,
    where $\mathrm{D}_X$ is the ring of differential operators on $X$.
    By convention, we always work with left D-modules.
\end{enumerate}

\begin{example}
  Let $X$ be a smooth variety over $\mathbb{C}$ of dimension $d$.
  We write $\mathbb{Q}^\rH[d]$ for the constant Hodge module,
  whose underlying perverse sheaf is the shifted constant sheaf $\mathbb{Q}[d]$.
\end{example}

\begin{example}[IC-extensions]
  Let $X$ be a smooth variety over $\mathbb{C}$.
  Let $Z \subset X$ be an integral subvariety of $X$,
  and let $U \subset Z$ be a nonempty open subvariety.
  Given a variation of Hodge structure $\mathcal{V}$ on $U$,
  a fundamental result of Saito \cite{MR1047415}
  says that $\mathcal{V}$ extends in a unique and functorial manner to
  a mixed (in fact, \emph{pure}) Hodge module on $X$,
  written $\IC(Z, \mathcal{V})$, with support on $Z$.
  As the notation suggests,
  the rationalization of the Hodge module $\IC(Z, V)$ is $i_* j_{!*} \mathcal{V}$,
  where $i\colon Z \to X$, $j\colon U \to Z$, and $j_{!*}$ is the intermediate extension.
\end{example}

\begin{example}[Strict support decomposition]
  \label{example:strict-support}
  The previous example is universal in the following sense:
  Let $M$ be a pure Hodge module. Then there is a decomposition
  \begin{equation}
    M \cong \bigoplus_i \IC(Z_i, \mathcal{V}_i),
  \end{equation}
  where $\{Z_i\}$ is a finite set of irreducible closed subvarieties of $X$,
  and $\mathcal{V}_i$ is a variation of Hodge structure on an open subvariety of $Z_i$.
  The reader may take this to be the definition of a pure Hodge module, although we have refrained from discussing weights.
\end{example}

\paragraph{Fourier transforms}
A sheaf of~$\mathbb{Q}$-vector spaces on~$\mathbb{C}^n \times X$
is said to be \emph{monodromic} if
its restriction to each $\mathbb{G}_{\mathrm{m}}$-orbit
(for the scaling action of~$\mathbb{G}_{\mathrm{m}}$ on $\mathbb{C}^n$)
is locally constant.
An object of~$\derived^\bounded_{\mathrm{c}}(\mathbb{C}^n \times X, \mathbb{Q})$
is monodromic if its cohomology sheaves are monodromic.
Finally, $M \in \derived^\bounded(\MHM(X))$ is monodromic if its rationalization is monodromic.

\begin{remark}
  \label{remark:monodromic-d-module}
  There is a corresponding notion for a D-module to be monodromic,
  which means (roughly) that it admits a generalized eigenspace decomposition with respect to the Euler operator $\sum x_i \partial_{x_i}$.
  If $M$ is a regular holonomic D-module,
  then $M$ is monodromic (in this sense) if and only if
  the corresponding perverse sheaf is monodromic in the sense described above,
  cf.~\cite[Proposition~7.12]{MR0864073}.
\end{remark}

The \emph{Fourier--Sato transform} carries monodromic perverse sheaves on~$\mathbb{C}^n \times X$
to monodromic perverse sheaves on~$\mathbb{C}^{n, \vee} \times X$.
For example, in the case~$X = \mathrm{pt}$,
the skyscraper sheaf~$\mathbb{Q}_0$ supported at~$0 \in \mathbb{C}^n$
is carried to the perverse sheaf~$\mathbb{Q}[n]$.
A comprehensive treatment may be found in \cite{MR1299726},
but \cref{example:fourier-saito-of-subspace} is sufficient for the time being;
a general definition for Hodge modules will be given below.

There is a corresponding operation for D-modules,
often called the \emph{Fourier--Laplace transform},
which (in rough terms) exchanges the action of the differential operators~$x_1, \dots, x_n$
(resp.,~$\partial_{x_1}, \dots, \partial_{x_n}$) pulled back from~$\mathbb{C}^n$
with the action of~$\partial_{y_1}, \dots, \partial_{y_n}$
(resp.,~$y_1, \dots, y_n$) pulled back from~$\mathbb{C}^{n, \vee}$.
Here,~$y_1, \dots, y_n$ is the basis dual to~$x_1, \dots, x_n$.

\begin{example}
  \label{example:fourier-saito-of-subspace}
  Let $\iota\colon V \subset \mathbb{C}^n$ be a vector subspace of dimension~$d$,
  and let $\iota^\perp\colon V^{\perp} \subset \mathbb{C}^{n, \vee}$ be the annihilator of~$V$,
  i.e., $V^{\perp} = \{w \mid w(V) = 0\}$.
  Then
  \begin{equation}
    \FS(\iota_* \mathbb{Q}[d]) \cong \iota^\perp_* \mathbb{Q}[d^\perp],
  \end{equation}
  where~$d^\perp = \dim V^\perp = n - d$.

  To prove this, one may write~$\mathbb{C}^n \cong V \oplus W$ for some subspace~$W$.

  There is a natural isomorphism~$\mathbb{C}^{n, \vee} \to V^\vee \oplus W^\vee$
  which induces an isomorphism from~$V^\perp$ to~$W^\vee$.
  We write
  \begin{equation}
    \iota_* \mathbb{Q}[d] \cong \mathbb{Q}[d] \boxtimes \mathbb{Q}_0,
  \end{equation}
  where~$\mathbb{Q}_0$ is the skyscraper sheaf supported at~$0 \in W$.

  There is a compatibility between exterior products and the Fourier--Sato transformation \cite[Proposition~3.7.15]{MR1299726}
  (which uses different terminology).
  In our case, this means that
  \begin{equation}
    \FS(\mathbb{Q}[d] \boxtimes \mathbb{Q}_0)
    \cong \FS(\mathbb{Q}[d]) \boxtimes \FS(\mathbb{Q}_0)
    \cong \mathbb{Q}_0 \boxtimes \mathbb{Q}[\dim W^\vee].
  \end{equation}
  Under the isomorphism between~$\mathbb{C}^{n, \vee}$ and~$V^\vee \oplus W^{\vee}$,
  the rightmost term is identified with~$\iota^\perp_* \mathbb{Q}[d^\perp]$.
\end{example}

\paragraph{Monodromic Hodge modules}
We now describe the Fourier--Laplace transform for monodromic mixed Hodge modules,
first introduced by T. Saito \cite{MR4434749}.
In fact, there are many possible mixed Hodge module structures
on the Fourier--Laplace transform of the underlying D-module, see~\cite[Remark~3.24]{MR4434749}.
We have adopted the ``geometric'' approach of \cite{MR4610959}.

Suppose that~$M$ is a mixed Hodge module on~$\mathbb{C}^n \times X$.
Consider the diagram
\begin{equation}
  \label{equation:fourier-diagram}
  \begin{tikzcd}
    \mathbb{C}^n \times \mathbb{C}^{n, \vee} \times X \ar[d, bend left=15, "p^\vee"] \ar[r, "p"] & \mathbb{C}^n \times X \\
    \mathbb{C}^{n, \vee} \times X, \ar[u, bend left=15, "\sigma"]
  \end{tikzcd}
\end{equation}
where~$\sigma$ is the zero-section, and~$p, p^\vee$ are the projections.
Finally, there is a function $g\colon\mathbb{C}^n \times \mathbb{C}^{n, \vee} \times X \to \mathbb{C}$,
given by the natural pairing between the fibers of~$\mathbb{C}^n$ and~$\mathbb{C}^{n,\vee}$ over~$X$.
We recall the following definition from \cite{MR4610959}.

\begin{definition}
  \label{definition:fourier-laplace-transform}
  Let $M$ be a mixed Hodge module on $\mathbb{C}^n \times X$.
  Then we define
  \begin{equation}
    \FL(M) = \sigma^* \phi_g p^! M,
  \end{equation}
  where $\phi_g$ denotes the vanishing cycles functor along $g$.
\end{definition}

\begin{remark}
\label{remark:all-functors-derived}
  When working with mixed Hodge modules, our notational convention is that all functors (e.g., $f_*, f^*, f_!, f^!$) are derived.
\end{remark}

\begin{remark}
  In \cite{MR4610959}, the stated definition is $\mathcal{H}^0 \sigma^* \phi_g p^! M[-n]$,
  where cohomology is taken with respect to the standard t-structure on D-modules,
  which corresponds to the perverse t-structure on $\derived^\bounded_{\mathrm{c}}(X, \mathbb{Q})$ under the Riemann--Hilbert correspondence.
  (The shift by $-n$, which we have not followed, is a matter of convention.)
  In fact, we claim that the rationalization of~$\sigma^* \phi_g p^! M$ is concentrated in a perverse single degree,
  so the difference between $\FL(M)$ and $\mathcal{H}^{-n}\FL(M)$ is a formality.

  Indeed, after rationalization and commuting vanishing cycles with proper pushforward
  (as in the proof of \cref{lemma:fourier-laplace-commutes-with-pushforward} below),
  the formula above for $\FL(M)$ coincides with formula (10.3.31) of \cite{MR1299726}
  for the Fourier--Sato transform of the rationalization of~$M$.
  Then \cite[Proposition~10.3.18]{MR1299726} shows that the Fourier--Sato transform is perverse t-exact.
\end{remark}

\begin{lemma}
  \label{lemma:fourier-laplace-commutes-with-pushforward}
  Let~$X$ be a smooth, proper variety,
  and let~$M \in \derived^\bounded(\MHM(X))$ be monodromic.
  Suppose that~$f\colon X \to Y$ is a proper morphism, and let
  \begin{equation}
    \begin{gathered}
      \pi = 1 \times f\colon\mathbb{C}^n \times X \to \mathbb{C}^n \times Y \\
      \pi^\vee = 1 \times f\colon\mathbb{C}^{n, \vee} \times X \to \mathbb{C}^{n, \vee} \times Y
    \end{gathered}
  \end{equation}
  be the corresponding maps.
  Then~$\pi_* M$ is a monodromic mixed Hodge module,
  and there is an isomorphism of mixed Hodge modules
  \begin{equation}
    \FL(\pi_* M) \cong \pi^\vee_* \FL(M).
  \end{equation}
\end{lemma}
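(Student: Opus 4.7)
The plan is to chase \cref{definition:fourier-laplace-transform} through a cube of morphisms and reduce the claim to three standard compatibilities of Saito's six-functor formalism. Setting~$\tilde{\pi}\colonequals 1 \times 1 \times f$, I would begin by forming two Cartesian squares
\begin{equation}
  \begin{tikzcd}[column sep=small]
    \mathbb{C}^n \times \mathbb{C}^{n,\vee} \times X \ar[r,"\tilde{\pi}"] \ar[d,"p_X"'] & \mathbb{C}^n \times \mathbb{C}^{n,\vee} \times Y \ar[d,"p_Y"] \\
    \mathbb{C}^n \times X \ar[r,"\pi"'] & \mathbb{C}^n \times Y
  \end{tikzcd}
  \qquad
  \begin{tikzcd}[column sep=small]
    \mathbb{C}^{n,\vee} \times X \ar[r,"\sigma_X"] \ar[d,"\pi^\vee"'] & \mathbb{C}^n \times \mathbb{C}^{n,\vee} \times X \ar[d,"\tilde{\pi}"] \\
    \mathbb{C}^{n,\vee} \times Y \ar[r,"\sigma_Y"'] & \mathbb{C}^n \times \mathbb{C}^{n,\vee} \times Y,
  \end{tikzcd}
\end{equation}
and noting that the natural pairing~$g$ on~$\mathbb{C}^n \times \mathbb{C}^{n,\vee}$ pulls back to the same function on both~$\mathbb{C}^n \times \mathbb{C}^{n,\vee} \times X$ and~$\mathbb{C}^n \times \mathbb{C}^{n,\vee} \times Y$, so that~$g \circ \tilde{\pi} = g$.

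Monodromicity of~$\pi_* M$ is a preliminary observation: the map~$\pi = 1 \times f$ is~$\mathbb{G}_{\mathrm{m}}$\dash equivariant for the scaling action on~$\mathbb{C}^n$ and the trivial action on~$X$ and~$Y$, and proper pushforward along an equivariant map preserves the property that each cohomology sheaf restricts to a local system on every~$\mathbb{G}_{\mathrm{m}}$-orbit.

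For the main isomorphism, I would assemble the following three compatibilities, all valid in~$\derived^\bounded(\MHM(-))$:
\begin{enumerate}
  \item $!$-base change along the left square, valid since~$\pi$ (and hence~$\tilde{\pi}$) is proper, giving~$p_Y^! \pi_* \cong \tilde{\pi}_* p_X^!$.
  \item Commutation of vanishing cycles with proper pushforward: since~$g \circ \tilde{\pi} = g$ and~$\tilde{\pi}$ is proper, $\phi_g \tilde{\pi}_* \cong \tilde{\pi}_* \phi_g$.
  \item $*$-base change along the right square, valid since~$\tilde{\pi}$ is proper: $\sigma_Y^* \tilde{\pi}_* \cong \pi^\vee_* \sigma_X^*$.
\end{enumerate}
Chaining them yields
\begin{equation}
  \FL(\pi_* M) = \sigma_Y^* \phi_g p_Y^! \pi_* M \cong \sigma_Y^* \phi_g \tilde{\pi}_* p_X^! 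M \cong \sigma_Y^* \tilde{\pi}_* \phi_g p_X^! M \cong \pi^\vee_* \sigma_X^* \phi_g p_X^! M = \pi^\vee_* \FL(M),
\end{equation}
as desired.

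The one genuinely delicate point is that (1)--(3) must be promoted from the familiar constructible-sheaf statements of~\cite{MR1299726} to Saito's category of mixed Hodge modules. Both base changes are part of the six-functor package for~$\MHM$, while the commutation of~$\phi_g$ with proper pushforward is one of the foundational compatibilities in Saito's construction. No further ingredient specific to the Fourier--Laplace transform is needed beyond \cref{definition:fourier-laplace-transform} itself, so the argument is essentially formal once these three compatibilities are in place.
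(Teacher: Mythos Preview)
Your proposal is correct and follows essentially the same approach as the paper: both arguments reduce the statement to the three compatibilities (proper base change for~$p^!$, commutation of~$\phi_g$ with proper pushforward, and proper base change for~$\sigma^*$) applied to the same pair of Cartesian squares. The paper cites~\cite[\S4.4.3]{MR1047415} and~\cite[Theorem~2.14]{MR1047415} for the Hodge-module versions of base change and the vanishing-cycles compatibility, and~\cite[Proposition~8.5.7]{MR1299726} for preservation of monodromicity under pushforward, which you may wish to include as precise references.
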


\begin{proof}
    The fact that monodromicity of constructible complexes
    is preserved by pushforward is given by \cite[Proposition~8.5.7]{MR1299726}.
    We apply this fact to the rationalization of~$M$.

    For the identity, the idea is to commute~$\pi_*$ with each of the functors~$p^!, \phi_g$, and~$\sigma_*$ used to define~$\FL(M)$.
    We write
    \begin{equation}
      \begin{tikzcd}
        \mathbb{C}^n \times \mathbb{C}^{n, \vee} \times Y \ar[d, bend left=15, "q^\vee"] \ar[r, "q"] & \mathbb{C}^n \times Y \\
        \mathbb{C}^{n, \vee} \times Y \ar[u, "\tau", bend left=15]
      \end{tikzcd}
    \end{equation}
    for the analogue of \eqref{equation:fourier-diagram},
    and let~$h\colon\mathbb{C}^n \times \mathbb{C}^{n, \vee} \times Y \to \mathbb{C} \times Y$ be the natural pairing.

    There is a cartesian square
    \begin{equation}
      \begin{tikzcd}
        \mathbb{C}^{n,\vee} \times X \ar[r, "\sigma"] \ar[d, "\pi^\vee"] & \mathbb{C}^n \times \mathbb{C}^{n, \vee} \times X \ar[d, "\pi \times \pi^\vee"] \\
        \mathbb{C}^{n, \vee} \times Y \ar[r, "\tau"] & \mathbb{C}^{n} \times \mathbb{C}^{n, \vee} \times Y.
      \end{tikzcd}
    \end{equation}
    Since the vertical maps are proper,
    by proper base change for mixed Hodge modules
    \cite[\S4.4.3]{MR1047415}
    there is an isomorphism of functors
    \begin{equation}
      \pi^\vee_* \circ \sigma^* \cong \tau^*\circ (\pi \times \pi^\vee)_*.
    \end{equation}
    Next, $g = h \circ (\pi \times \pi^\vee)$,
    and according to the compatibility between vanishing cycles
    and proper pushforward \cite[Theorem~2.14]{MR1047415}
    there is an isomorphism of functors
    \begin{equation}
      (\pi \times \pi^\vee)_* \circ \phi_g \cong \phi_h \circ (\pi \times \pi^\vee)_*.
    \end{equation}
    Finally, applying proper base change once more
    to the the cartesian square
    \begin{equation}
      \begin{tikzcd}
        \mathbb{C}^n \times \mathbb{C}^{n,\vee} \times X \ar[r, "p"] \ar[d, "\pi \times \pi^\vee"] & \mathbb{C}^n \times X \ar[d, "\pi"] \\
        \mathbb{C}^n \times \mathbb{C}^{n, \vee} \times Y \ar[r, "q"] & \mathbb{C}^n \times Y,
      \end{tikzcd}
    \end{equation}
    we obtain an isomorphism of functors~$q^! \circ \pi_* \cong (\pi \times \pi^\vee)_* \circ p^!$.
    The statement follows from putting all of the identities together.
\end{proof}

\begin{lemma}
  \label{lemma:orthogonal-in-general}
  Let~$X$ be a smooth variety over~$\mathbb{C}$.
  Let~$\iota\colon E \subset \mathbb{C}^n \times X$ be a vector bundle over~$X$,
  and let~$\iota^\perp\colon E^{\perp} \subset \mathbb{C}^{n, \vee} \times X$
  be the annihilator of~$E$.
  We write $d = \dim E$, $d^{\perp} = \dim E^{\perp}$.
  \begin{enumerate}
    \item\label{enumerate:orthogonal-in-general-1}
      The Hodge module $\iota_* \mathbb{Q}^\rH[d]$ is monodromic.
    \item\label{enumerate:orthogonal-in-general-2}
      There is an isomorphism of Hodge modules
      \begin{equation}
        \FL(\iota_*\mathbb{Q}^\rH[d]) \cong \iota^{\perp}_* \mathbb{Q}^\rH[d^{\perp}](d - d^{\perp}).
      \end{equation}
    \item \label{enumerate:orthogonal-in-general-3}
      Let $\pi\colon\mathbb{C}^n \times X \to X$
      and $\pi^\vee\colon\mathbb{C}^{n, \vee} \times X \to X$ be the projections.
      Then~$\pi_* \iota_* \mathbb{Q}^\rH[d]$ is monodromic,
      and there is an isomorphism of mixed Hodge modules
      \begin{equation}
        \FL(\pi_* \iota_* \mathbb{Q}^\rH[d]) \cong \pi^\vee_* \iota^{\perp}_* \mathbb{Q}^\rH[d^\perp](d - d^\perp)
      \end{equation}
   \end{enumerate}
\end{lemma}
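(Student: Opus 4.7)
The plan is to prove the three parts in order, reducing each to the previous. Part (i) is immediate: each fibre $E_x \subset \mathbb{C}^n$ is a linear subspace, hence stable under the scaling $\mathbb{G}_{\mathrm{m}}$-action; every orbit of the action on $\mathbb{C}^n \times X$ is therefore either contained in $E$ or disjoint from it, so the rationalisation $\iota_* \mathbb{Q}[d]$ is locally constant on each orbit, i.e.\ monodromic.

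For part (ii), the isomorphism can be checked locally on $X$. I would pick a Zariski cover $\{U_\alpha\}$ trivialising $E$ as $V_\alpha \times U_\alpha$ for a linear subspace $V_\alpha \subset \mathbb{C}^n$, so that dually $E^\perp|_{U_\alpha} \cong V_\alpha^\perp \times U_\alpha$. On each such open one has an external product factorisation
\begin{equation*}
  \iota_* \mathbb{Q}^\rH[d]\big|_{\mathbb{C}^n \times U_\alpha}
  \;\cong\;
  (j_{V_\alpha})_* \mathbb{Q}^\rH[\dim V_\alpha] \boxtimes \mathbb{Q}^\rH_{U_\alpha}[\dim U_\alpha].
\end{equation*}
Inspection of the defining chain $\FL = \sigma^* \phi_g p^!$ shows that $\FL$ commutes with external products, so the problem reduces to the fibrewise case $X = \mathrm{pt}$. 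There, choosing a splitting $\mathbb{C}^n = V_\alpha \oplus W_\alpha$ and applying the external product decomposition once more reduces the question to two building blocks: the FL of the constant Hodge module on a vector space, and the FL of the skyscraper at the origin. The underlying perverse sheaves are identified by \cref{example:fourier-saito-of-subspace}; the Tate twist $(d - d^\perp)$ then emerges from tracking the weight shifts contributed by $p^!$ and $\phi_g$ in the mixed Hodge module category.

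For part (iii), I would apply $\pi^\vee_*$ to both sides of (ii); the right-hand side becomes the claimed RHS. For the left-hand side one commutes $\pi^\vee_*$ past $\FL$. Although \cref{lemma:fourier-laplace-commutes-with-pushforward} is stated for proper morphisms, its proof is purely formal, relying only on base change together with the compatibility of vanishing cycles with proper and smooth pullbacks. Since $\pi$ is smooth, the relevant cartesian squares admit smooth base change, and the same sequence of identities produces $\FL(\pi_* \iota_* \mathbb{Q}^\rH[d]) \cong \pi^\vee_* \FL(\iota_* \mathbb{Q}^\rH[d])$; the monodromicity assertion is preserved by the pushforward. The principal technical obstacle is the Tate-twist bookkeeping in the fibrewise computation of (ii): the perverse-sheaf statement already follows from \cref{example:fourier-saito-of-subspace}, but controlling the weight shifts of $p^!$ and $\phi_g$ on the mixed Hodge module enhancement — rather than merely the underlying perverse sheaf — requires some care.
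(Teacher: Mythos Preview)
Your argument for part~(1) matches the paper's.

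For part~(2) there is a genuine gap. You propose to pick a Zariski cover trivialising $E$ as $V_\alpha \times U_\alpha$ for a \emph{fixed} linear subspace $V_\alpha \subset \mathbb{C}^n$. But a subbundle of a trivial bundle is almost never locally a constant subbundle: the inclusion $E \hookrightarrow \mathbb{C}^n \times X$ is encoded by a morphism $X \to \Gr(r,n)$, and your trivialisation would force this morphism to be locally constant. In the intended application ($X$ an orthogonal Grassmannian, $E$ a tautological-type subbundle of $\mathfrak{g}_1 \times X$) this fails outright, so the external-product reduction to \cref{example:fourier-saito-of-subspace} does not get off the ground. The paper argues instead indirectly and globally: both $\FL(\iota_*\mathbb{Q}^\rH[d])$ and $\iota^\perp_*\mathbb{Q}^\rH[d^\perp]$ are pure, so by the strict support decomposition each is the IC-extension of a variation of Hodge structure on~$E^\perp$. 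The fibrewise computation of \cref{example:fourier-saito-of-subspace} shows the underlying local systems have rank~$1$ and trivial monodromy, and two such variations on a connected base differ only by a Tate twist. This bypasses both the gluing problem and the weight bookkeeping you flagged as the main obstacle.

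For part~(3) your strategy---push forward both sides of~(2) and commute $\pi^\vee_*$ past $\FL$---is exactly the paper's, but your justification for the commutation is wrong. You claim the proof of \cref{lemma:fourier-laplace-commutes-with-pushforward} goes through for smooth $\pi$ because the cartesian squares admit smooth base change. The crucial step, however, is not a base-change identity at all: it is the compatibility $(\pi\times\pi^\vee)_* \circ \phi_g \cong \phi_h \circ (\pi\times\pi^\vee)_*$, which is the commutation of vanishing cycles with \emph{proper} pushforward and has no analogue for merely smooth pushforward. The paper simply invokes \cref{lemma:fourier-laplace-commutes-with-pushforward} as stated, tacitly using that in the application $X$ is projective, so the projection $\mathbb{C}^n \times X \to \mathbb{C}^n$ is proper.
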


\begin{proof}
  \Cref{enumerate:orthogonal-in-general-1} is a direct consequence of the definition:
  the underlying perverse sheaf of~$\iota_* \mathbb{Q}^\rH[d]$ is~$\iota_* \mathbb{Q}[d]$,
  and the only nonzero cohomology sheaf is constant along each~$\mathbb{G}_{\mathrm{m}}$-orbit in $\mathbb{C}^n \times X$.

  For \cref{enumerate:orthogonal-in-general-2},
  we argue indirectly as follows:
  From \cref{example:fourier-saito-of-subspace}, we know that the statement is true
  for the underlying perverse sheaves, at least locally over $X$.
  On the other hand, from the existence of the strict support decomposition
  described in \cref{example:strict-support},
  both sides are IC-extensions of variations of Hodge structures on $E^{\perp}$
  whose underlying local systems have rank $1$ and trivial monodromy
  (since the property of having trivial monodromy may be checked on a Zariski-open cover).
  It follows that the variations are isomorphic up to a twist.

  For \cref{enumerate:orthogonal-in-general-3},
  the fact that the pushforward is monodromic is given in \cref{lemma:fourier-laplace-commutes-with-pushforward},
  which also furnishes the identity
  \begin{equation}
    \FL(\pi_* \iota_* \mathbb{Q}^\rH[d]) \cong \pi^\vee_* \FL(\iota_* \mathbb{Q}^\rH[d])).
  \end{equation}
  The right-hand side is~$\pi_*^\vee \iota_*^\perp \mathbb{Q}^\rH[d^\perp](d - d^\perp)$,
  as one may see by pushing forward \cref{enumerate:orthogonal-in-general-2}.
\end{proof}

\subsection{Cohomology of the Fano scheme}
\label{subsection:upgrade}
The cohomology of~$\fano_k(Q_1 \cap Q_2)$ is computed in~\cite[Theorem 1.1]{MR3689749},
where an isomorphism of vector spaces
\begin{equation}
  \label{equation:chen-vilonen-xue-hyperelliptic}
  \HH^{d-i}(\fano_k(Q_1 \cap Q_2),\mathbb{C})
  \cong
  \bigoplus_{j=d-k-1}^g \left( \bigwedge\nolimits^{g-j}\HH^1(C,\mathbb{C}) \right)^{\oplus N(g-k,j;d-i)}
\end{equation}
is given.
Here~$N(a,b;c)$ denotes the coefficient of~$q^c$ in
\begin{equation}
  \label{equation:coefficient-expression}
  q^{-(b-a+1)(2a-1)}(1-q^{4b})\frac{\prod_{\ell=b-a+2}^{a+b-2}(1-q^{2\ell})}{\prod_{\ell=1}^{2a-2} (1-q^{2\ell})}.
\end{equation}
This result does not deal with the Hodge structures
on both sides of \eqref{equation:chen-vilonen-xue-hyperelliptic}.
We will upgrade the proof of \cite[Theorem 1.1]{MR3689749},
and explain the necessary modifications to obtain \cref{theorem:hodge-decomposition},
which upgrades \eqref{equation:chen-vilonen-xue-hyperelliptic}
to an isomorphism of Hodge structures.
From now on,
until the end of \cref{subsection:upgrade},
we will use the notation of \cite{MR3689749}.

\paragraph{Springer theory}
Let~$V$ be a~$\mathbb{C}$-vector space of dimension~$2n$.
We write~$G = \mathrm{SL}(V)$, and~$K = \mathrm{SO}(V, q)$,
where~$q$ is a nondegenerate quadratic form on~$V$.
We write~$\mathfrak{g}$ for the Lie algebra of~$G$,
and~$\mathfrak{g}_1$ for the Lie algebra of~$K$, respectively;
similarly, we write~$\mathcal{N}$
and~$\mathcal{N}_1$ for the nilpotent cones of~$G$ and~$K$, respectively.

We write~$\mathfrak{g}^{\rs}$ for the regular semisimple elements of~$\mathfrak{g}$,
and set~$\mathfrak{g}^{\rs}_1\colonequals\mathfrak{g}_1 \cap \mathfrak{g}^{\rs}$.
To each element~$\gamma \in \mathfrak{g}_1^{\rs}$,
one may associate the nondegenerate quadratic form~$(\gamma -, -)$,
where~$(-,-)$ is the bilinear form corresponding to $q$.
Thus to translate back into our notation,
one takes~$2n=2g+2$,
but in what follows we will use the notation of op.~cit.,
and use affine (instead of projective) dimensions.

The proof of \cref{theorem:hodge-decomposition}
proceeds by analyzing the relationship between the following three constructions from \cite{MR3689749}:
\begin{enumerate}
  \item
    There is a morphism $\mathcal{C} \to \mathfrak{g}^\rs_1$
    whose fiber over $\gamma \in \mathfrak{g}^\rs_1$
    is the hyperelliptic curve~$C_\gamma$ of genus $g$
    given by the double cover with equation
    \begin{equation}
      y^2 = \det(t \cdot 1 - \gamma).
    \end{equation}
    We write $f\colon\Jac \to \mathfrak{g}^\rs_1$ for the associated Jacobian fibration.

  \item
    For each $i=0,\ldots,g$
    we consider the family
    \begin{equation}
      v_i\colon E_i = \left\{(\gamma \in \mathfrak{g}_1, 0 \subset H_i \subset H_i^{\perp} \subset \mathbb{C}^{2n}) \mid  \gamma(H^{\perp}) = 0\right\} \to \mathfrak{g}_1
    \end{equation}
    where~$\dim H_i=i$.
    This is Reeder's resolution of the nilpotent orbit~$\overline{\mathrm{O}}_{2^i 1^{2n - 2i}}$
    from \cite{MR1328333}.

  \item
    For each $i=0,\ldots,g$
    we consider the family
    \begin{equation}
      \widecheck{v}_i\colon\widecheck{E}_i = \left\{(\widecheck{\gamma} \in \mathfrak{g}_1, 0 \subset H_i \subset H_i^{\perp} \subset \mathbb{C}^{2n}) \mid \widecheck{\gamma}(H_i) \subset H_i^{\perp} \right\} \to \mathfrak{g}_1
    \end{equation}
    where~$\dim H_i=i$.
    Over a regular semisimple element~$\gamma'$
    corresponding to a smooth quadric $Q'$,
    the fiber $\widecheck{E}_{i,\gamma'}$
    is the Fano scheme of (affine) $i$-planes
    on the intersection $Q' \cap Q$,
    where $Q$ is the smooth quadric associated to the fixed quadratic form $q$.
\end{enumerate}

We regard $E_i$ and $\widecheck{E}_i$ as subvarieties of $\mathfrak{g}_1 \times F_i$,
where~$F_i = \{0 \subset H_i \subset H^{\perp}_i \subset \mathbb{C}^{2n}\}$
is the orthogonal Grassmannian~$\OGr(i,q)=\OGr(i,2n)$.

\begin{lemma}
  \label{lemma:orthogonal-in-proof}
  With the above setup, we have that:
  \begin{enumerate}
    \item\label{enumerate:orthogonal-in-proof-1} $E_i$ and $\widecheck{E}_i$ are vector bundles on~$F_i$.
    \item\label{enumerate:orthogonal-in-proof-2} With respect to the Killing form on the fibers of~$\mathfrak{g}_1 \times F_i \to F_i$,
      one has $\widecheck{E}_i = (E_i)^{\perp}$.
  \end{enumerate}
\end{lemma}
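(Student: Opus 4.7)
The plan is to parse both conditions via the standard identification $\mathfrak{g}_1 = \mathfrak{so}(V,q) \cong \Lambda^2 V^*$ sending $\gamma \mapsto \omega_\gamma$ with $\omega_\gamma(v,w) = (\gamma v, w)$. Under this identification, the defining conditions of $E_i$ and $\widecheck{E}_i$ become linear constraints on $\omega_\gamma$ of locally constant rank in $H_i$, which handles part (1); part (2) then reduces to a short trace computation combined with a dimension count, using that the Killing form on $\mathfrak{g}_1$ is a nonzero multiple of $(\gamma_1, \gamma_2) \mapsto \operatorname{tr}(\gamma_1\gamma_2)$ and is nondegenerate.

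For part (1), I would first observe that for $\gamma \in \mathfrak{g}_1$, the condition $\gamma(H_i^\perp) = 0$ is equivalent to $\omega_\gamma$ vanishing whenever either argument lies in $H_i^\perp$, the ``other'' direction coming from skew-symmetry. Equivalently, $\operatorname{im}(\gamma) \subseteq (H_i^\perp)^\perp = H_i$, and $\omega_\gamma$ descends to a skew form on $V/H_i^\perp$. The perfect pairing $V/H_i^\perp \times H_i \to \mathbb{C}$ induced by $q$ then identifies the fiber of $E_i$ over $H_i$ with $\Lambda^2 H_i$, so globally $E_i \cong \Lambda^2 \mathcal{H}$ where $\mathcal{H}$ is the tautological rank-$i$ subbundle on $F_i$, exhibiting $E_i$ as a vector bundle of rank $\binom{i}{2}$. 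For $\widecheck{E}_i$, the condition $\widecheck{\gamma}(H_i) \subseteq H_i^\perp$ amounts to the vanishing of $\omega_{\widecheck{\gamma}}$ on $H_i \times H_i$, so $\widecheck{E}_i$ is the kernel of the surjective bundle map $\mathfrak{g}_1 \otimes \mathcal{O}_{F_i} \to \Lambda^2 \mathcal{H}^*$, and hence a vector bundle of rank $\binom{2n}{2} - \binom{i}{2}$.

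For part (2), I would compute that $\operatorname{tr}(\gamma\widecheck{\gamma}) = 0$ for any $\gamma$ and $\widecheck{\gamma}$ in the respective fibers over a common $H_i$. Indeed, part (1) gives that $\gamma$ vanishes on $H_i^\perp$ and has image in $H_i$; combining this with $\widecheck{\gamma}(H_i) \subseteq H_i^\perp$ yields $\gamma\widecheck{\gamma}(H_i) = 0$ and $\operatorname{im}(\gamma\widecheck{\gamma}) \subseteq H_i$, so in any basis adapted to $H_i$ the matrix of $\gamma\widecheck{\gamma}$ has no diagonal entries, whence its trace vanishes. This proves the inclusion $\widecheck{E}_i \subseteq E_i^\perp$, and equality follows because the ranks sum to $\binom{2n}{2} = \dim\mathfrak{g}_1$ and the Killing form is nondegenerate. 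I do not anticipate serious obstacles here: the only delicate bookkeeping point is the passage from the one-sided vanishing $\gamma(H_i^\perp) = 0$ to the two-sided image control $\operatorname{im}(\gamma) \subseteq H_i$, which is precisely where the interplay between the isotropic flag $H_i \subseteq H_i^\perp$ and the skew-symmetry of $\gamma$ enters.
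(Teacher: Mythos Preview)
Your argument is correct and arrives at the same conclusion as the paper, but by a more explicit route. For part~(1) the paper simply notes that the fibers over~$F_i$ are linear subspaces of~$\mathfrak{g}_1$ and invokes the transitive action of~$\mathrm{O}(V,q)$ on~$F_i$ to deduce that the rank is constant; you instead identify the fibers concretely as~$\Lambda^2\mathcal{H}$ and as the kernel of the surjection onto~$\Lambda^2\mathcal{H}^*$, which gives the same conclusion together with the actual bundle description. For part~(2) the paper again uses the~$\mathrm{O}(V,q)$-action to reduce to a single standard fiber and then performs the trace calculation there, whereas you run the trace argument uniformly over~$F_i$ and finish with a rank count; the underlying computation $\operatorname{tr}(\gamma\widecheck\gamma)=0$ is the same in both.

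One caveat worth flagging: you have read the paper's description of~$\mathfrak{g}_1$ literally as~$\mathfrak{so}(V,q)\cong\Lambda^2 V^*$, but in the Chen--Vilonen--Xue framework being invoked,~$\mathfrak{g}_1$ is the $(-1)$-eigenspace of the Cartan involution, i.e., the $q$-self-adjoint (symmetric) traceless operators --- this is also what is needed for $(\gamma\,{-},{-})$ to be a genuine quadratic form. Your proof is insensitive to this distinction: the image/kernel analysis and the trace computation go through verbatim, with~$\Lambda^2$ replaced by~$\operatorname{Sym}^2$ throughout, and the rank identity $\operatorname{rk}E_i+\operatorname{rk}\widecheck E_i=\dim\mathfrak{g}_1$ still holds since~$\widecheck E_i$ is the kernel of a surjection whose target has the same dimension as the fiber of~$E_i$. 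Only the specific numbers~$\binom{i}{2}$ and~$\binom{2n}{2}-\binom{i}{2}$ would change.
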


\begin{proof}
  For \cref{enumerate:orthogonal-in-proof-1},
  the fibers of $E_i$ and $\widecheck{E}_i$ over $F_i$
  are vector subspaces of $\mathfrak{g}_1$;
  to see that the rank is constant, one considers the action of $\mathrm{O}(V, q)$ on $F_i$, $E_i$, and $\widecheck{E}_i$.

  For \cref{enumerate:orthogonal-in-proof-2},
  again using the action of $\mathrm{O}(V, q)$,
  one may reduce to the case of the fiber over a standard isotropic subspace.
  Up to scaling, the Killing form coincides with the trace form $M, N \mapsto \mathrm{Tr}(MN)$,
  and the result is a direct calculation.
\end{proof}

\paragraph{Upgrading to mixed Hodge modules}
The Killing form induces an isomorphism between $\mathfrak{g}_1$ and~$\mathfrak{g}_1^{\vee}$,
so we may regard the Fourier--Laplace transform as taking monodromic Hodge modules on $\mathfrak{g}_1$
to monodromic Hodge modules on $\mathfrak{g}_1$.

The following duality result is the first step in upgrading
the proof of \cite[Theorem 1.1]{MR3689749}.

\begin{proposition}
  \label{proposition:duality}
  Let $d_i = \dim E_i$, $\widecheck{d}_i = \dim \widecheck{E}_i$.
  \begin{enumerate}
    \item The Hodge modules $v_{i,*} \mathbb{Q}^\rH[d_i]$
      and $\widecheck{v}_{i,*} \mathbb{Q}^\rH[\widecheck{d}_i]$ are monodromic.

    \item There is an isomorphism
      \begin{equation}
        \label{equation:duality}
        \FL(v_{i,*} \mathbb{Q}^\rH[d_i]) \cong \widecheck{v}_{i,*} \mathbb{Q}^\rH[\widecheck{d}_i](d_i - \widecheck{d}_i)
      \end{equation}
      in~$\derived^\bounded(\MHM(\mathfrak{g}_1))$.
  \end{enumerate}
\end{proposition}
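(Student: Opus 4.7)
The plan is to realize \cref{proposition:duality} as essentially a direct application of \cref{lemma:orthogonal-in-general}\cref{enumerate:orthogonal-in-general-3}, once the geometric setup is translated correctly.

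First, I would factor the maps $v_i$ and $\widecheck{v}_i$ through the ambient product $\mathfrak{g}_1 \times F_i$. By construction, $E_i \subset \mathfrak{g}_1 \times F_i$ is a sub-vector bundle over $F_i$, and $v_i$ is the composition of the closed immersion $\iota\colon E_i \hookrightarrow \mathfrak{g}_1 \times F_i$ with the first projection $\pi\colon \mathfrak{g}_1 \times F_i \to \mathfrak{g}_1$. An identical factorization holds for $\widecheck{v}_i$ via a closed immersion $\widecheck\iota\colon \widecheck{E}_i \hookrightarrow \mathfrak{g}_1 \times F_i$. By \cref{lemma:orthogonal-in-proof}\cref{enumerate:orthogonal-in-proof-2}, after identifying $\mathfrak{g}_1$ with $\mathfrak{g}_1^{\vee}$ via the (nondegenerate) Killing form, $\widecheck{E}_i$ is precisely the annihilator $E_i^\perp$ in $\mathfrak{g}_1^\vee \times F_i$. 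This is the crucial input that makes \cref{lemma:orthogonal-in-general} applicable fiberwise over $F_i$.

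Next, I would apply \cref{lemma:orthogonal-in-general}\cref{enumerate:orthogonal-in-general-3} with $X = F_i$, the vector bundle $E = E_i$, and $n = \dim \mathfrak{g}_1$. Monodromicity of $\pi_* \iota_* \mathbb{Q}^\rH[d_i] = v_{i,*} \mathbb{Q}^\rH[d_i]$ (and similarly for the checked version) is an immediate consequence, proving the first assertion. For the second assertion, the same lemma furnishes an isomorphism
\begin{equation}
  \FL(v_{i,*} \mathbb{Q}^\rH[d_i]) = \FL(\pi_* \iota_* \mathbb{Q}^\rH[d_i]) \cong \pi^\vee_* \iota^\perp_* \mathbb{Q}^\rH[\widecheck{d}_i](d_i - \widecheck{d}_i),
\end{equation}
and the right-hand side is identified with $\widecheck{v}_{i,*} \mathbb{Q}^\rH[\widecheck{d}_i](d_i - \widecheck{d}_i)$ under the Killing form isomorphism between $\mathfrak{g}_1$ and $\mathfrak{g}_1^\vee$.

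The main obstacle, such as it is, lies not in any single hard step but in verifying that the Killing form identification is genuinely compatible with the Fourier--Laplace transform in the Hodge-module sense used in \cref{definition:fourier-laplace-transform}. The function $g$ used to form vanishing cycles is the natural pairing $\mathbb{C}^n \times \mathbb{C}^{n,\vee} \to \mathbb{C}$, whereas after identifying $\mathfrak{g}_1 \cong \mathfrak{g}_1^\vee$ one transports this to the Killing pairing on $\mathfrak{g}_1 \times \mathfrak{g}_1$. One must check this transported pairing is indeed what governs the annihilator relation appearing in \cref{lemma:orthogonal-in-proof}, but this is precisely the content of that lemma (up to a harmless scaling by the trace form). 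After this compatibility check, the proposition follows with no further computation.
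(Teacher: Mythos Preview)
Your proposal is correct and is exactly the paper's approach: the paper's proof consists of the single sentence ``Given \cref{lemma:orthogonal-in-proof}, both parts follow from \cref{lemma:orthogonal-in-general},'' and you have simply unpacked what that sentence means, with the same identifications $X=F_i$, $\mathbb{C}^n=\mathfrak{g}_1$, and $E_i^\perp=\widecheck{E}_i$ via the Killing form. Your discussion of the Killing form compatibility is a welcome elaboration but is not a genuine obstacle, as the paper already handles it in the paragraph preceding the proposition.
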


\begin{proof}
    Given \cref{lemma:orthogonal-in-proof}, both parts follow from \cref{lemma:orthogonal-in-general}.
\end{proof}

Given \cref{proposition:duality}, the remainder of the proof of \cref{theorem:hodge-decomposition} proceeds as follows.

The dimension and monodromy calculation in \cite[Lemma 2.1]{MR3689749}
shows that $v_i$ is semismall for each $i$
and that the decomposition theorem for $v_i$ takes the form
\begin{equation}
  \label{equation:IC}
  v_{i,*} \mathbb{Q}^\rH[d_i]
  \cong
  \bigoplus_{j=0}^i \bigoplus_{k=0}^{m_{i,j}} \IC(\overline{\mathrm{O}}_{2^j 1^{2n - 2j}}, \mathbb{Q}^\rH)^{\oplus s_{i,j,k}} [\pm k],
\end{equation}
in $\derived^\bounded(\MHM(\mathfrak{g}_1))$,
where $m_{i,j}$ and $s_{i,j,k}$ are certain integers.
Therefore, the next goal is to identify
the Fourier--Laplace transform of the IC-sheaves
appearing in \eqref{equation:IC}.

Consider the Jacobian fibration $f\colon\Jac \to \mathfrak{g}_1^{\rs}$,
and define the variation of Hodge structure
\begin{equation}
  W_j\colonequals (f_* \mathbb{Q}^\rH)_{\mathrm{prim}},
\end{equation}
where the subscript denotes the primitive part of the variation with respect to the relative theta divisor, and we emphasize that the pushforward is derived (\cref{remark:all-functors-derived}).

\begin{lemma}
  \label{lemma:fourier-laplace-nilpotent-orbit}
  There is an isomorphism
  \begin{equation}
    \FL(\IC(\overline{\mathrm{O}}_{2^j 1^{2n - 2j}}, \mathbb{Q}^{\mathrm{H}})) \cong \IC(\mathfrak{g}_1, W_j),
  \end{equation}
  in $\derived^\bounded(\MHM(\mathfrak{g}_1))$,
  where we have suppressed Tate twists.
\end{lemma}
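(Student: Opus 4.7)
The plan is to induct on $j$, identifying $\IC(\overline{\mathrm{O}}_{2^j 1^{2n-2j}}, \mathbb{Q}^\rH)$ as the top summand of $v_{j,*}\mathbb{Q}^\rH[d_j]$, transferring it via \cref{proposition:duality} to a summand of $\widecheck{v}_{j,*}\mathbb{Q}^\rH[\widecheck{d}_j]$, and matching it against the primitive Jacobian VHS over the dense open subset $\mathfrak{g}_1^{\rs}$.

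The dimension and monodromy calculation of \cite[Lemma~2.1]{MR3689749} shows that $v_j$ is a semismall resolution of $\overline{\mathrm{O}}_{2^j 1^{2n-2j}}$, so $\IC(\overline{\mathrm{O}}_{2^j 1^{2n-2j}}, \mathbb{Q}^\rH)$ appears in the decomposition \eqref{equation:IC} for $i=j$ with multiplicity one and no shift, while the remaining summands are IC-extensions of strictly smaller nilpotent orbits. Applying $\FL$ to \eqref{equation:IC} for $i=j$ and invoking \cref{proposition:duality} yields
\begin{equation*}
  \widecheck{v}_{j,*}\mathbb{Q}^\rH[\widecheck{d}_j](d_j - \widecheck{d}_j)
  \cong
  \FL\bigl(\IC(\overline{\mathrm{O}}_{2^j 1^{2n-2j}}, \mathbb{Q}^\rH)\bigr)
  \oplus
  \bigoplus_{j' < j,\, k} \FL\bigl(\IC(\overline{\mathrm{O}}_{2^{j'} 1^{2n-2j'}}, \mathbb{Q}^\rH)\bigr)^{\oplus s_{j,j',k}}[\pm k].
\end{equation*}

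For the base case $j = 0$, the orbit closure $\overline{\mathrm{O}}_{1^{2n}} = \{0\}$ is a point, and \cref{lemma:orthogonal-in-general} applied to the zero subspace gives $\FL(\mathbb{Q}^\rH_{\{0\}}) \cong \mathbb{Q}^\rH_{\mathfrak{g}_1}[\dim \mathfrak{g}_1]$ up to Tate twist, which is $\IC(\mathfrak{g}_1, W_0)$ for the trivial VHS $W_0$. For the inductive step, restricting $\widecheck{v}_{j,*}\mathbb{Q}^\rH[\widecheck{d}_j]$ to $\mathfrak{g}_1^{\rs}$ yields a smooth proper pushforward whose fibers are Fano schemes of affine $j$-planes on intersections of two smooth quadrics; by Reid and Desale--Ramanan, their cohomology decomposes in terms of $\rH^\bullet(\Jac C_\gamma, \mathbb{Q}) = \bigwedge\nolimits^\bullet \rH^1(C_\gamma, \mathbb{Q})$, exhibiting the restriction as a direct sum of shifts and Tate twists of the VHS $W_{j'}$. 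Subtracting the contributions for $j' < j$, identified by the inductive hypothesis, isolates $\FL(\IC(\overline{\mathrm{O}}_{2^j 1^{2n-2j}}, \mathbb{Q}^\rH))$ as an IC-extension on $\mathfrak{g}_1$ whose generic VHS is $W_j$ (up to Tate twist). Since a simple pure Hodge module is determined by its support together with its VHS on a dense open subset, the identification follows.

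The main obstacle is the precise determination of the Tate twists and the careful matching of multiplicities against the primitive Lefschetz decomposition of $f_*\mathbb{Q}^\rH$. The underlying perverse sheaf identification is essentially present in \cite{MR3689749}, so the new content is upgrading to mixed Hodge modules by tracking the weight shift $d_j - \widecheck{d}_j$ introduced by $\FL$ together with the natural weight of each primitive Jacobian VHS; this is a bookkeeping exercise but requires care, particularly in ensuring that the ``primitive'' direct summand on $\mathfrak{g}_1^{\rs}$ is correctly isolated from the total pushforward of the Fano scheme family before comparison with the IC on the nilpotent side.
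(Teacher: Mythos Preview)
Your overall strategy—apply $\FL$ to the decomposition \eqref{equation:IC} via \cref{proposition:duality} and identify the resulting simple summands over $\mathfrak{g}_1^{\rs}$—is the same as the paper's, which simply observes that the argument of \cite[Proposition~2.3]{MR3689749} lifts verbatim to Hodge modules once \cref{proposition:duality} is available. Your final paragraph correctly locates the new content in tracking the Hodge data through this lift.

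The gap is in your inductive step. You assert that ``by Reid and Desale--Ramanan'' the cohomology of the fibre of $\widecheck v_j$ over $\gamma\in\mathfrak{g}_1^{\rs}$—that is, of $\fano_{j-1}(Q_1\cap Q_2)$—decomposes into the $W_{j'}$. But Reid's theorem only identifies the \emph{maximal} Fano scheme $\fano_{g-1}$ with $\Jac C_\gamma$, and Desale--Ramanan give a moduli description of $\fano_{g-2}$, not a cohomological decomposition. For intermediate $j$ the statement you need is exactly \cite[Theorem~1.1]{MR3689749}, and at the level of variations of Hodge structure it is \cref{theorem:hodge-decomposition} itself, so invoking it here is circular. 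The non-circular route, which is what the paper makes explicit in the parallel stacky computation (\cref{proposition:key-even}), is to work only at the top index $i=g$: there Reid's theorem genuinely gives $\widecheck v_g|_{\mathfrak{g}_1^{\rs}}\cong f$, hence $\widecheck v_{g,*}\mathbb{Q}^\rH|_{\mathfrak{g}_1^{\rs}}\cong f_*\mathbb{Q}^\rH$ as Hodge modules, and the primitive decomposition of the Jacobian pushforward supplies all the $\IC(\mathfrak{g}_1,W_j)$ at once. Matching these against the $\FL(\IC(\overline{\mathrm{O}}_{2^j1^{2n-2j}},\mathbb{Q}^\rH))$ on the nilpotent side is then forced by the perverse-sheaf bijection already established in \cite[Proposition~2.3]{MR3689749}, since the isotypic decomposition of a semisimple Hodge module is determined by that of its underlying perverse sheaf. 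Your induction can be salvaged by running it inside this single identity at $i=g$, rather than appealing to the Fano-scheme cohomology at each intermediate $j$.
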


\begin{proof}
  With \cref{proposition:duality} in hand,
  the argument of the proof of \cite[Proposition 2.3]{MR3689749}
  works \textit{mutatis mutandis} in the category of Hodge modules.
\end{proof}

\begin{proof}[Proof of \cref{theorem:hodge-decomposition}]
  Returning to \eqref{equation:IC}
  and applying the Fourier--Laplace transform and \cref{lemma:fourier-laplace-nilpotent-orbit},
  we obtain isomorphisms (with twists suppressed)
  \begin{equation}
    \begin{aligned}
      \widecheck v_{i,*} \mathbb{Q}^\rH[\widecheck d_i]
      &\cong \bigoplus_{j=0}^i \bigoplus_{k=0}^{m_{i,j}} \FL(\IC(\overline{\mathrm{O}}_{2^j 1^{2n - 2j}}, \mathbb{Q}^\rH))^{\oplus s_{i,j,k}}[\pm k] \\
      &\cong \bigoplus_{j=0}^i \bigoplus_{k=0}^{m_{i,j}} \IC(\mathfrak{g}_1, W_j)^{s_{i,j,k}}[\pm k].
    \end{aligned}
  \end{equation}
  in $\derived^\bounded(\MHM(\mathfrak{g}_1))$.
  From here, the conclusion of the proof proceeds exactly as in the proof of \cite[Theorem~1.1]{MR3689749},
  by passing to cohomology and the fiber at an element of $\mathfrak{g}_1^{\rs}$.
\end{proof}

\subsection{Evidence for \texorpdfstring{\cref{conjecture:hyperelliptic,conjecture:hyperelliptic-refined}}{Conjectures \ref{conjecture:hyperelliptic} and \ref{conjecture:hyperelliptic-refined}}}
\label{subsection:hyperelliptic-evidence}
As evidenced by \eqref{equation:BGM-K0},
the conjectural identity \eqref{equation:hyperelliptic-refined}
for~$k=g-2$
is known to hold up to a class~$T\in\Kzero(\Var/\field)$ for which~$(1+\lefschetz)\cdot T=0$.
Extending this type of identity from~$g-2$ to all~$k=0,\ldots,g-2$
is the topic of \cref{conjecture:hyperelliptic-refined}.

The following lemma explains how it holds for~$k=0$ up to multiplication with~$\lefschetz$.
\begin{lemma}
  \label{lemma:k=0-calculation}
  Let~$Q_1\cap Q_2\subset\mathbb{P}^{2g+1}$ be a smooth intersection of quadrics,
  and let~$C$ be the associated hyperelliptic curve.
  We have that
  \begin{equation}
    \lefschetz[Q_1\cap Q_2]
    =
    \lefschetz\left( [\mathbb{P}^{2g-1}]-\lefschetz^{g-1}[\mathbb{P}^1]+\lefschetz^{g-1}[C] \right)
  \end{equation}
  in~$\Kzero(\Var/\field)$.
\end{lemma}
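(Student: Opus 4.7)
My plan is to compute the class of the blowup $\widetilde{X}\colonequals\mathrm{Bl}_{Q_1 \cap Q_2}\mathbb{P}^{2g+1}$ in two ways. Since $Q_1 \cap Q_2 \subset \mathbb{P}^{2g+1}$ is smooth of codimension~$2$, the exceptional divisor of $\widetilde{X} \to \mathbb{P}^{2g+1}$ is a $\mathbb{P}^1$-bundle over $Q_1 \cap Q_2$, and the blowup formula gives
\begin{equation*}
  [\widetilde{X}] = [\mathbb{P}^{2g+1}] + \lefschetz \cdot [Q_1 \cap Q_2].
\end{equation*}
The $\lefschetz$-factor in the lemma is already visible here: the blowup identity produces $\lefschetz \cdot [Q_1 \cap Q_2]$ rather than $[Q_1 \cap Q_2]$ itself, and there is no way to divide by $\lefschetz$ in $\Kzero(\Var/\field)$. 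At the same time, $\widetilde{X}$ realises the pencil as a morphism $\widetilde{X} \to \mathbb{P}^1$; over each of the $2g+2$ branch points of $\pi\colon C \to \mathbb{P}^1$ the fibre is a corank-$1$ quadric cone of class $1 + \lefschetz \cdot [\mathbb{P}^{2g-1}] = [\mathbb{P}^{2g}]$, so setting $U\colonequals\mathbb{P}^1 \setminus \{2g+2\text{ points}\}$ and letting $\mathcal{Q}|_U \to U$ be the smooth quadric bundle formed by the remaining fibres,
\begin{equation*}
  [\widetilde{X}] = [\mathcal{Q}|_U] + (2g+2) \cdot [\mathbb{P}^{2g}].
\end{equation*}

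The heart of the matter is the identity
\begin{equation*}
  [\mathcal{Q}|_U] = [U]\bigl([\mathbb{P}^{2g}] - \lefschetz^g\bigr) + \lefschetz^g \cdot [C|_U].
\end{equation*}
I would prove this by iteratively applying the classical birational equivalence $\mathrm{Bl}_p\, Q_n^{\mathrm{sm}} \cong \mathrm{Bl}_{Q_{n-2}^{\mathrm{sm}}}\, \mathbb{P}^n$ in families: projection of a smooth $n$-dimensional quadric from a point $p \in Q$ contracts the proper transform of the tangent cone $T_pQ \cap Q$ onto the smooth $(n-2)$-dimensional quadric which is its base. After restricting to an open on which a section $\sigma$ of $\mathcal{Q}|_U \to U$ exists---which occurs generically, since a smooth quadric over $\field(\mathbb{P}^1)$ admits a rational point---the two blowup formulas combined with the Zariski-local triviality of the relevant projective bundle yield the recursion
\begin{equation*}
  [\mathcal{Q}_n|_U] = [U](1 + \lefschetz^n) + \lefschetz \cdot [\mathcal{Q}'_{n-2}|_U].
\end{equation*}
Iterating $g$ times telescopes the sum, and the terminal family $\mathcal{Q}_0|_U \to U$---whose fibres consist of the two points of a rank-$2$ quadric---is by construction the étale double cover of rulings of $\mathcal{Q}|_U$, hence coincides with $\pi^{-1}(U) = C|_U$.

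Combining the three displayed identities and simplifying (using $[C|_U] - [U] = [C] - [\mathbb{P}^1]$ together with $\lefschetz \cdot [\mathbb{P}^{2g}] - \lefschetz^{2g+1} = \lefschetz \cdot [\mathbb{P}^{2g-1}]$) produces $\lefschetz \cdot [Q_1 \cap Q_2] = \lefschetz \cdot [\mathbb{P}^{2g-1}] + \lefschetz^g([C] - [\mathbb{P}^1])$, which rearranges to the statement of the lemma. The main technical obstacle is the rigorous execution of the recursion in families: one has to cope with the non-global existence of $\sigma$, identify each successive tangent-cone family and its terminal rulings cover with $C|_U$, and keep track of the Zariski-local triviality of projective bundles in $\Kzero(\Var/\field)$. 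These are standard stratification and descent arguments, but some care is needed.
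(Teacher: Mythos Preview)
Your approach is essentially the same as the paper's: both compute $[\mathrm{Bl}_{Q_1\cap Q_2}\mathbb{P}^{2g+1}]$ two ways, once via the blowup formula and once via the pencil $\mathcal{Q}\to\mathbb{P}^1$. The only difference is that the paper invokes \cite[Corollary~2.7]{MR3848025} to obtain $[\mathcal{Q}]=[\mathbb{P}^1][\mathbb{P}^{g-1}](1+\lefschetz^{g+1})+\lefschetz^g[C]$ directly (this is exactly your formula $[\mathcal{Q}|_U]=[U]([\mathbb{P}^{2g}]-\lefschetz^g)+\lefschetz^g[C|_U]$ plus the singular-fibre contribution, after the identity $[\mathbb{P}^{2g}]-\lefschetz^g=[\mathbb{P}^{g-1}](1+\lefschetz^{g+1})$), whereas you sketch the iterated hyperbolic reduction that underlies that citation; the remaining algebra is identical.
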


\begin{proof}
  Consider the total space~$p\colon\mathcal{Q}\to\mathbb{P}^1$
  of the pencil spanned by~$Q_1$ and~$Q_2$.
  Because we are working over an algebraically closed field,
  \cite[Corollary~2.7]{MR3848025} gives us that
  \begin{equation}
    [\mathcal{Q}]=[\mathbb{P}^1][\mathbb{P}^{g-1}](1+\lefschetz^{g+1})+[C]\lefschetz^g,
  \end{equation}
  as the hyperelliptic curve~$C$ also arises as the hyperbolic reduction.
  On the other hand we have that~$\mathcal{Q}\cong\operatorname{Bl}_{Q_1\cap Q_2}\mathbb{P}^{2g+1}$,
  thus
  \begin{equation}
    [\mathcal{Q}]=[\mathbb{P}^{2g+1}]+\lefschetz[Q_1\cap Q_2].
  \end{equation}
  An elementary calculation shows that
  \begin{equation}
    [\mathbb{P}^1][\mathbb{P}^{g-1}](1+\lefschetz^{g+1})
    =
    [\mathbb{P}^{2g+1}]+\lefschetz[\mathbb{P}^{2g-1}]-\lefschetz^g[\mathbb{P}^1],
  \end{equation}
  which finishes the proof.
\end{proof}

\begin{remark}
  \label{remark:g=2-3-folds}
  \Cref{conjecture:hyperelliptic-refined} is also known to hold in the case of~$g=2$
  for classical reasons:
  the projection from a line~$L$ on~$Q_1\cap Q_2\subset\mathbb{P}^5$
  can be resolved by blowing up,
  which exhibits~$\operatorname{Bl}_LQ_1\cap Q_2$
  as~$\operatorname{Bl}_C\mathbb{P}^3$,
  giving the identity \eqref{equation:hyperelliptic-refined}
  for~$g=2$ and~$k=0$ without the factor present in \cref{lemma:k=0-calculation}.
\end{remark}

The cases~$k=0,g-2$ of \cref{conjecture:hyperelliptic} are known to hold
by the work of Bondal--Orlov, Kuznetsov and Tevelev--Torres,
as explained in the introduction.

\paragraph{Verifying decompositions after applying the E-polynomial motivic measure}
For fixed~$g\geq 2$ and~$k=0,\ldots,g-2$
one can use \cref{theorem:hodge-decomposition}
to explicitly compute the Hodge numbers of~$\fano_k(Q_1\cap Q_2)$.
The Hodge numbers of~$C$ are immediate,
and those of~$\Sym^iC$ follow from, e.g., \cite[Theorem~1.1]{MR2777820}.
The use of these building blocks in \cref{conjecture:hyperelliptic-refined}
is motivated by interpolating between~\eqref{equation:bondal-orlov-kuznetsov-sod}
and the BGMN conjecture \cite{MR4557892}.

In \cite{hodge-diamond-cutter} the Hodge numbers of~$\fano_k(Q_1\cap Q_2)$
are implemented,
following \cref{theorem:hodge-decomposition}.
The code in \cref{appendix:code}
thus verifies \cref{conjecture:hyperelliptic-refined}
for a given~$g$ and~$k$
\emph{after} applying the motivic measure~$\mu_{\mathrm{E}}$ from \cref{example:E-polynomial}.

Let us illustrate one example.
\begin{example}
  Let~$g=4$ and~$k=1$.
  The identity in \eqref{equation:hyperelliptic-refined}
  reads
  \begin{equation}
    [\fano_1(Q_1\cap Q_2)]
    =
    ([\mathrm{pt}]+\lefschetz+\lefschetz^2 + \lefschetz^4+\lefschetz^5+\lefschetz^6 + \lefschetz^8+\lefschetz^9+\lefschetz^{10})
    + (\lefschetz^2+\lefschetz^3+\lefschetz^6+\lefschetz^7)[C]
    + \lefschetz^4[\Sym^2].
  \end{equation}
  By taking the E-polynomial motivic measure,
  we can check the identity on the level of Hodge diamonds
  as in \cref{figure:hodge-diamond-decomposition}.
  \begin{figure}[ht]
    \centering
    \begin{equation}
      \begin{array}{ccccccccccccccccc}
          &   & 1                               &   &   &   & 1         &                       & \\
          &   &                                 &   &   &   &           &                       & \\
          &   & 1                               &   &   &   & 1         &                       & \\
          &   &                                 &   &   &   &           &                       & \\
          &   & 2                               &   &   &   & 1         &                       &    & 1 \\
          & 4 &                                 & 4 &   &   &           &                       & 4  &                         & 4 \\
          &   & 2                               &   &   &   &           &                       &    & 2\\
          & 4 &                                 & 4 &   &   &           &                       & 4  &                         & 4 \\
          &   & 3                               &   &   &   & 1         &                       &    & 1                       &      &                       &   &   & 1 \\
          & 4 &                                 & 4 &   &   &           &                       &    &                         &      &                       &   & 4 &                               & 4 \\
        6 &   & 18                              &   & 6 & = & 1         & {\ }\quad + \quad{\ } &    &                         &      & {\ }\quad + \quad{\ } & 6 &   & \mathclap{17}                 &      & 6 \\
          & 4 &                                 & 4 &   &   &           &                       &    &                         &      &                       &   & 4 &                               & 4 \\
          &   & 3                               &   &   &   & 1         &                       &    & 1                       &      &                       &   &   & 1 \\
          & 4 &                                 & 4 &   &   &           &                       & 4  &                         & 4 \\
          &   & 2                               &   &   &   &           &                       &    & 2\\
          & 4 &                                 & 4 &   &   &           &                       & 4  &                         & 4 \\
          &   & 2                               &   &   &   & 1         &                       &    & 1 \\
          &   &                                 &   &   &   &           &                       & \\
          &   & 1                               &   &   &   & 1         &                       & \\
          &   &                                 &   &   &   &           &                       & \\
          &   & 1                               &   &   &   & 1         &                       & \\
          &   & \mathclap{\fano_1(Q_1\cap Q_2)} &   &   & = & M_{4,1,0} & {\ }\quad + \quad{\ } &    & \mathclap{M_{4,1,1}[C]} &      & {\ }\quad + \quad{\ } &   &   & \mathclap{M_{4,1,2}[\Sym^2C]}
      \end{array}
    \end{equation}
    \caption{Decomposition of the Hodge diamond of $\fano_1(Q_1\cap Q_2)$ for~$g=4$}
    \label{figure:hodge-diamond-decomposition}
  \end{figure}
\end{example}

Finally, we can explain how \cref{theorem:hodge-decomposition}
can be used to give evidence for the effectivity aspect of~\cref{conjecture:hyperelliptic},
by using~$\mu_{\derived^\bounded}$,
for which~$\mu_{\derived^\bounded}(\lefschetz)=1$.
Applying Pascal's identity repeatedly
to the term~$\binom{2g-k-i}{k+1-i}$ of \eqref{equation:M}
after setting~$\mathbb{L}=1$
gives the following easy lemma.

\begin{lemma}
  \label{lemma:multiplicity-reduction}
  Setting~$\lefschetz=1$ in \eqref{equation:M} gives
  \begin{equation}
    M_{g,k,i}|_{\lefschetz=1}
    =
    \binom{2g-4-k-i}{k+1-i} + 2\binom{2g-4-k-i}{k-i}.
  \end{equation}
\end{lemma}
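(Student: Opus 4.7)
The plan is elementary once one identifies what each ingredient of $M_{g,k,i}$ becomes at $\lefschetz = 1$. First I would observe that the overall prefactor $\lefschetz^{i(g-k-1)}$ evaluates to $1$, and each Gaussian binomial coefficient satisfies $\gauss{n}{m}\big|_{\lefschetz=1} = \binom{n}{m}$ (directly from the definition, using L'Hôpital or simply by expanding the limiting ratio). Meanwhile each parenthetical sum of powers of $\lefschetz$ collapses to the number of terms: $2$, $6$, $4$ respectively (the last correction term contributes $1$). Thus
\begin{equation*}
  M_{g,k,i}\big|_{\lefschetz=1}
  = \binom{2g-k-i}{k+1-i}
    - 2\binom{2g-k-i-4}{k-i}
    - 6\binom{2g-k-i-4}{k-i-1}
    - 4\binom{2g-k-i-4}{k-i-2}
    - \binom{2g-k-i-4}{k-i-3}.
\end{equation*}

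Next I would reduce the claim to a familiar binomial identity by setting $n \colonequals 2g-k-i$ and $m \colonequals k+1-i$, so that the desired equality becomes
\begin{equation*}
  \binom{n}{m} - 2\binom{n-4}{m-1} - 6\binom{n-4}{m-2} - 4\binom{n-4}{m-3} - \binom{n-4}{m-4}
  = \binom{n-4}{m} + 2\binom{n-4}{m-1}.
\end{equation*}
Rearranging turns this into
\begin{equation*}
  \binom{n}{m}
  = \binom{n-4}{m} + 4\binom{n-4}{m-1} + 6\binom{n-4}{m-2} + 4\binom{n-4}{m-3} + \binom{n-4}{m-4}
  = \sum_{j=0}^{4}\binom{4}{j}\binom{n-4}{m-j},
\end{equation*}
which is the special case $a=4$ of the Vandermonde convolution $\binom{n}{m}=\sum_j\binom{a}{j}\binom{n-a}{m-j}$, equivalently four iterations of Pascal's rule $\binom{n}{m}=\binom{n-1}{m}+\binom{n-1}{m-1}$.

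There is essentially no obstacle here; the only thing one should be slightly careful about is that the identity holds uniformly in the range $i = 0, \ldots, k+1$, including the boundary cases where some of the binomials $\binom{n-4}{m-j}$ with $j > m$ vanish (which is consistent with the convention that $\gauss{n-4}{m-j} = 0$ for negative lower index in \eqref{equation:M}). Since Pascal's identity and Vandermonde hold with the standard convention $\binom{a}{b}=0$ for $b<0$, the identity goes through without modification in all boundary cases, completing the proof.
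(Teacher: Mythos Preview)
Your proof is correct and essentially identical to the paper's own argument: the paper simply says ``applying Pascal's identity repeatedly to the term~$\binom{2g-k-i}{k+1-i}$ after setting~$\mathbb{L}=1$,'' and you have spelled out exactly this computation, phrasing the fourfold Pascal recursion as the Vandermonde convolution $\binom{n}{m}=\sum_j\binom{4}{j}\binom{n-4}{m-j}$. Your treatment of the boundary cases is a welcome addition, since the paper leaves this implicit.
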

Thus after setting~$\mathbb{L}=1$ one gets a positive integer,
but we have not managed to prove rigorously that the expression~$M_{g,k,i}$ from \eqref{equation:M}
is effective already in~$\mathbb{Z}[\mathbb{L}]$.
It is a by-product of the experimental verification of \cref{conjecture:hyperelliptic-refined} in \cref{appendix:code} for many~$g$ and~$k$
that \eqref{equation:M} is indeed also effective in all the cases the conjecture has been numerically verified.

\section{Decomposing Fano schemes in the stacky case}
To give evidence for \cref{conjecture:stacky}
we will use the same method as in \cite[\S3]{MR4673249},
comparing the number of exceptional objects
on the right-hand side of \eqref{equation:stacky-conjecture}
to the Euler characteristic of~$\fano_k(Q_1\cap Q_2)$.
In \cref{subsection:cohomology-stacky,subsection:euler}
we will prove \cref{proposition:hodge-tate}
and \cref{proposition:euler-characteristic-even}
which concerns the left-hand side in \eqref{equation:stacky-conjecture}.
In \cref{subsection:stacky-symmetric-power}
we will recall the definition and properties of ``stacky symmetric powers'',
allowing us to compute the number of exceptional objects
on the right-hand side of \eqref{equation:stacky-conjecture}
in \cref{subsection:evidence-stacky}.

\subsection{Cohomology of the Fano scheme}
\label{subsection:cohomology-stacky}
As in the proof of \cref{theorem:hodge-decomposition},
we will determine the Hodge structure
on the cohomology of~$\fano_k(Q_1\cap Q_2)$,
and thus prove \cref{proposition:hodge-tate},
by tracking through the arguments in \cite{MR4216588}
and ensuring that they lift to the category of mixed Hodge modules.
As in op.~cit.~we will use~$n$ instead of~$g$.

\paragraph{Springer theory}
Let~$V$ be a~$\mathbb{C}$-vector space of dimension~$2n+1$.
We write~$G=\operatorname{SL}(V)$,
and~$K=\operatorname{SO}(V,q)$,
where~$q$ is a nondegenerate quadratic form on~$V$.
We write~$\mathfrak{g}$ for the Lie algebra of~$G$,
and~$\mathfrak{g}_1$ for the Lie algebra of~$K$,
respectively;
similarly, we write~$\mathcal{N}$ and~$\mathcal{N}_1$
for the nilpotent cones of~$G$ and~$K$, respectively.

We write $\mathfrak{g}^{\rs}$ for the regular semisimple elements of $\mathfrak{g}$,
and set $\mathfrak{g}^{\rs}_1 = \mathfrak{g}_1 \cap \mathfrak{g}^{\rs}$.
To each element~$\gamma \in \mathfrak{g}_1^{\rs}$,
one may associate the nondegenerate quadratic from $(\gamma -, -)$,
where $(-,-)$ is the bilinear form corresponding to $q$.

\paragraph{Upgrading to mixed Hodge modules}
We now describe a local system~$\mathcal{L}$ on~$\mathfrak{g}_1^{\rs}$.
For each $\gamma \in \mathfrak{g}_1^{\rs}$,
let $S_{\gamma} = \{a_0, \dots, a_{2n}\}$ be the branch points of the associated stacky curve.
We set $\mathcal{L}_{\gamma}$ to be
the free~$\mathbb{Z}$-module on the set $\{\pm a_0, \cdots, \pm a_{2g}\}/\pm 1$
of assignments $a_i \to \pm$, modulo a global sign.

As $\gamma$ varies
one obtains a local system $\mathcal{L}$ on $\mathfrak{g}_1^{\rs}$
of rank $2^{2n}$.
There exists a decomposition of local systems
\begin{equation}
  \mathcal{L} \cong \bigoplus_{i = 0}^g \mathcal{L}_i,
\end{equation}
where $\mathcal{L}_i$ is spanned by the elements $\{\pm a_i\}$ with $i$ plus signs if $i$ is even,
and $2n + 1 - i$ plus signs if $i$ is odd (up to the global sign).
Each $\mathcal{L}_i$ is a finite local system of rank $\binom{2n + 1}{i}$.
We regard $\mathcal{L}$ (and each $\mathcal{L}_i$) as a variation of Hodge structure
by declaring the stalks to be of Tate type and of weight $0$.

\begin{proposition}
  \label{proposition:key-even}
  There is an isomorphism of Hodge modules
  \begin{equation}
    \FL(\IC(\overline{\mathrm{O}}_{2^i 1^{2n - 2i + 1}}), \mathbb{Q}^\rH) \cong \IC(\mathfrak{g}_1, \mathcal{L}_i).
  \end{equation}
\end{proposition}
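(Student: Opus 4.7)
The plan is to follow the strategy of \cref{lemma:fourier-laplace-nilpotent-orbit}, replacing the hyperelliptic inputs from \cref{subsection:upgrade} with their stacky analogues from \cite{MR4216588}. Two ingredients are required: a stacky analogue of the duality in \cref{proposition:duality}, and an upgrade of the local-system identification of \cite{MR4216588} to the level of variations of Hodge structure.

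For the duality, I would rerun the constructions of \cref{subsection:upgrade} with $\dim V = 2n+1$. Reeder's resolution $v_i\colon E_i \to \mathfrak{g}_1$ of the nilpotent orbit closure $\overline{\mathrm{O}}_{2^i 1^{2n-2i+1}}$ together with the companion family $\widecheck v_i\colon \widecheck E_i \to \mathfrak{g}_1$ are vector subbundles of the trivial $\mathfrak{g}_1$-bundle over the orthogonal Grassmannian $F_i = \OGr(i,V)$, and the argument of \cref{lemma:orthogonal-in-proof} carries over to show that $\widecheck E_i = E_i^{\perp}$ under the Killing form on $\mathfrak{g}_1$. Invoking \cref{lemma:orthogonal-in-general}\cref{enumerate:orthogonal-in-general-3} yields the monodromicity of $v_{i,*}\mathbb{Q}^\rH[d_i]$ and an isomorphism
\begin{equation*}
  \FL(v_{i,*}\mathbb{Q}^\rH[d_i]) \cong \widecheck v_{i,*}\mathbb{Q}^\rH[\widecheck d_i](d_i - \widecheck d_i)
\end{equation*}
in $\derived^\bounded(\MHM(\mathfrak{g}_1))$, exactly as in \cref{proposition:duality}.

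Next, the semismallness of $v_i$ proved in \cite{MR4216588} combined with the decomposition theorem for pure Hodge modules gives a splitting of $v_{i,*}\mathbb{Q}^\rH[d_i]$ as a direct sum of IC-sheaves $\IC(\overline{\mathrm{O}}_{2^j 1^{2n-2j+1}}, \mathbb{Q}^\rH)$ for $j \leq i$, with multiplicities determined in loc.~cit. Applying $\FL$ to both sides and inducting on $i$, the identification reduces to the top stratum. On the open regular semisimple locus $\mathfrak{g}_1^{\rs}$, the underlying perverse sheaf of $\FL(\IC(\overline{\mathrm{O}}_{2^i 1^{2n-2i+1}}, \mathbb{Q}^\rH))$ is identified in \cite{MR4216588} with the local system underlying $\mathcal{L}_i$, via the signed-branch-point description coming from the fiber calculation of $\widecheck v_i$ over a regular semisimple element. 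Saito's uniqueness of intermediate extensions then upgrades this to an isomorphism of pure Hodge modules on all of $\mathfrak{g}_1$, once the variation of Hodge structure is matched on $\mathfrak{g}_1^{\rs}$.

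The main obstacle is this last Hodge-theoretic upgrade: the Fourier--Laplace transform equips $\mathcal{L}_i$ with \emph{some} variation of Hodge structure, and one must verify that it is of Tate type and weight zero as declared in the statement. Because $\IC(\overline{\mathrm{O}}_{2^i 1^{2n-2i+1}}, \mathbb{Q}^\rH)$ is a direct summand of the pure Hodge module $v_{i,*}\mathbb{Q}^\rH[d_i]$ of weight $d_i$, its Fourier--Laplace transform is pure, and a direct weight computation along the lines of \cref{lemma:orthogonal-in-general}\cref{enumerate:orthogonal-in-general-2} pins down the weight after accounting for the Tate twist $(d_i - \widecheck d_i)$. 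The finite monodromy of $\mathcal{L}_i$ then forces the variation to decompose into Tate twists by rigidity, and the weight calculation shows it is concentrated in weight zero, as required. No substantial new input beyond the arguments in \cite{MR4216588} is needed to conclude.
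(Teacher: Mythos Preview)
Your overall strategy---duality via \cref{lemma:orthogonal-in-general}, decomposition theorem, then identification on $\mathfrak{g}_1^{\rs}$---matches the paper's, but there is one structural difference and one genuine gap.

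The paper does \emph{not} induct over the resolutions $v_i$. It works only with the maximal index $i = n$: the decomposition theorem for $v_n$ is multiplicity-free,
\[
  v_{n,*}\mathbb{Q}^\rH[d_n] \;\cong\; \bigoplus_{k=0}^n \IC(\overline{\mathrm{O}}_{2^k 1^{2n-2k+1}}, \mathbb{Q}^\rH),
\]
and the dual family $\widecheck v_n$ is a finite \'etale cover of $\mathfrak{g}_1^{\rs}$ of degree $2^{2n}$. Hence $\widecheck v_{n,*}\mathbb{Q}^\rH[\widecheck d_n]$ restricted to $\mathfrak{g}_1^{\rs}$ is \emph{automatically} the local system $\mathcal{L}$ with the Tate-type Hodge structure (pushforward of the constant Hodge module along a finite \'etale map). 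One then has two direct sums of $n+1$ simple Hodge modules related by $\FL$, and the matching $k \leftrightarrow i$ is forced by rank, which is checked on the level of perverse sheaves in \cite{MR4216588}. No separate verification of Tate type is needed.

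Your rigidity step is where the gap lies. The claim that ``finite monodromy of $\mathcal{L}_i$ forces the variation to decompose into Tate twists'' is not valid in general: a simple polarizable variation of Hodge structure with finite monodromy need not be of Tate type. After trivializing the monodromy on a finite cover one obtains a constant Hodge structure $H$ with an action of the covering group, but $H$ can perfectly well be of type $(p,q)\oplus(q,p)$ with $p\neq q$ while still being irreducible over $\mathbb{Q}$ (take $H = \mathbb{Q}(\zeta_4)$ with $\mathbb{Z}/4\mathbb{Z}$ acting and the two eigenlines placed in bidegrees $(1,0)$ and $(0,1)$). So purity plus irreducibility plus finite monodromy does not pin down the Hodge filtration. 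In your inductive scheme with $i<n$ the fibers of $\widecheck v_i$ over $\mathfrak{g}_1^{\rs}$ are positive-dimensional Fano schemes whose Hodge--Tate property is exactly what \cref{proposition:hodge-tate} asserts, so you cannot invoke it without circularity. The fix is precisely what the paper does: drop the induction, use only $i=n$, and let the finiteness of $\widecheck v_n$ over $\mathfrak{g}_1^{\rs}$ hand you the Tate-type structure for free.
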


\begin{proof}
  We explain how to lift the proof of \cite[Proposition~3.1]{MR4216588} to Hodge modules.
  First, one considers families $v_n, \widecheck v_n$,
  which are analogous to the families
  in the hyperelliptic case considered previously.
  Arguing exactly as in the hyperelliptic case,
  one sees that there is an isomorphism of monodromic Hodge modules
  \begin{equation}
    \FL(v_{n,*} \mathbb{Q}^\rH[d_n]) \cong  \widecheck v_{n,*} \mathbb{Q}^\rH[\widecheck d_n](d_n - \widecheck d_n).
  \end{equation}
  Over the regular semisimple locus $\mathfrak{g}_1^{\rs}$,
  the family $\widecheck v_n$ is the relative Fano scheme of maximal linear subspaces
  in the intersection of two quadrics,
  as one of the quadrics varies over $\mathfrak{g}_1^{\rs}$.
  This is a finite covering space of degree $2^{2n}$,
  and the generic part of the right-hand side is the local system $\mathcal{L}$
  (up to shifts and twists).

  From the decomposition theorem, one sees that there is a decomposition of Hodge modules
  \begin{equation}
     \widecheck v_{n,*} \mathbb{Q}^\rH[\widecheck d_n] \cong \IC(\mathfrak{g}_1, \mathcal{L}) \cong \bigoplus_{i = 0}^n \IC(\mathfrak{g}_1, \mathcal{L}_i),
  \end{equation}
  where the terms on the right are simple Hodge modules, and we have suppressed twists.

  On the other hand, the decomposition theorem for $v_n$ gives
  \begin{equation}
     v_{n,*} \mathbb{Q}^\rH[d_i] \cong \bigoplus_{k = 0}^n \IC(\overline{\mathrm{O}}_{2^k 1^{2n - 2k + 1}}, \mathbb{Q}^\rH),
  \end{equation}
  and the terms on the right are simple.

  Therefore, there is an isomorphism of Hodge modules (with twists suppressed)
  \begin{equation}
    \bigoplus_{k = 0}^n \FL(\IC(\overline{\mathrm{O}}_{2^k 1^{2n - 2k + 1}}, \mathbb{Q}^\rH)) \cong \bigoplus_{i = 0}^n \IC(\mathfrak{g}_1, \mathcal{L}_i),
  \end{equation}
  and we need to show that the $k$th term on the left
  matches with the $i$th term on the right.
  Each local system $\mathcal{L}_i$ is distinguished by its rank,
  so it suffices to match terms on the level of perverse sheaves,
  which is done in \cite[\S4]{MR4216588}.
\end{proof}

\begin{proof}[Proof of \cref{proposition:hodge-tate}]
  With~\cref{proposition:key-even} in hand, one gets \cref{proposition:hodge-tate} by arguing exactly as in \cite[\S6.1]{MR4216588},
  and noting that the stalks of $\IC(\mathfrak{g}_1, \mathcal{L}_i)$ at elements of $\mathfrak{g}_1^{\rs}$ are of Tate type.
\end{proof}

\subsection{Euler characteristic of the Fano scheme}
\label{subsection:euler}
The goal of this subsection is to prove the following.
\begin{proposition}
  \label{proposition:euler-characteristic-even}
  Let~$Q_1\cap Q_2\subset\mathbb{P}^{2g}$ be a smooth intersection of smooth quadrics,
  where~$g\geq 2$.
  For~$k=0,\ldots,g-2$ we have
  \begin{equation}
    \label{equation:euler-characteristic-fano-scheme}
    \chi(\fano_k(Q_1\cap Q_2))
    =
    \binom{g}{k+1}4^{k+1},
  \end{equation}
  and~$\HHHH_i(\fano_k(Q_1\cap Q_2))=0$ for~$i\neq 0$,
  thus
  \begin{equation}
    \label{equation:zeroth-hochschild-homology}
    \dim\HHHH_0(\fano_k(Q_1\cap Q_2))
    =
    \binom{g}{k+1}4^{k+1}.
  \end{equation}
\end{proposition}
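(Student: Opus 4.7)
The plan is to reduce both statements to a single Euler-characteristic computation via \cref{proposition:hodge-tate} together with the Hochschild--Kostant--Rosenberg decomposition, and then to evaluate that Euler characteristic.

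The first step is to invoke \cref{proposition:hodge-tate}, which tells us that the cohomology of $\fano_k(Q_1\cap Q_2)$ is concentrated in even degrees and is of Hodge--Tate type; consequently $\mathrm{h}^{p,q}(\fano_k(Q_1\cap Q_2))$ vanishes whenever $p\neq q$. Feeding this vanishing into the HKR decomposition \eqref{equation:hochschild-kostant-rosenberg}, every summand $\HH^p(X,\Omega_X^q)$ contributing to $\HHHH_i$ satisfies $q-p=i$, hence vanishes for $i\neq 0$; this proves the second claim of the proposition. For $i=0$ the same vanishing yields
\begin{equation*}
  \dim_\field\HHHH_0(\fano_k(Q_1\cap Q_2))=\sum_p\dim\HH^p(X,\Omega_X^p)=\sum_i\dim\HH^{2i}(\fano_k(Q_1\cap Q_2),\mathbb{Q})=\chi(\fano_k(Q_1\cap Q_2)),
\end{equation*}
so \eqref{equation:zeroth-hochschild-homology} follows at once from \eqref{equation:euler-characteristic-fano-scheme}.

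It then remains to evaluate $\chi(\fano_k(Q_1\cap Q_2))$. Using the explicit Hodge numbers of \cref{proposition:hodge-tate} we obtain
\begin{equation*}
  \chi(\fano_k(Q_1\cap Q_2))=\sum_{i=0}^{d}\sum_{j=0}^{k+1}M(i+1,j,k)\binom{2g+1}{j},
\end{equation*}
reducing the problem to the combinatorial identity that this double sum equals $\binom{g}{k+1}4^{k+1}$. An alternative that bypasses \cref{proposition:hodge-tate} entirely is to realize $\fano_k(Q_1\cap Q_2)$ as the transverse zero locus of a regular section of $E=(\Sym^2\calU^\vee)^{\oplus 2}$ on $\Gr(k+1,V)$ and compute $\chi=\int_{\Gr(k+1,V)}c_{\mathrm{top}}(E)\cdot[c(\tangent_{\Gr})/c(E)]_{\dim\fano_k}$ by Schubert calculus. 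The target formula passes two sanity checks: it recovers $\chi(Q_1\cap Q_2)=4g$ for $k=0$, and extrapolating to the boundary case $k=g-1$ gives $\binom{g}{g}4^g=4^g$, the cardinality of the finite scheme.

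I expect the final evaluation to be the main obstacle. Summing coefficients of the $q$-binomials $\binom{2g-a-j}{a-j}_q$ hidden inside $M(a,j,k)$ and pairing the result against $\binom{2g+1}{j}$ is not obviously tractable in closed form. The clean product $\binom{g}{k+1}4^{k+1}$ hints at a combinatorial interpretation via the $2g+1$ distinguished points of the pencil, so the cleanest route will likely be to exploit the strict support decomposition of \cref{proposition:key-even}: writing $\widecheck{v}_{k,*}\mathbb{Q}^\rH[\widecheck{d}_k]\cong\bigoplus_j\IC(\mathfrak{g}_1,\mathcal{L}_j)^{\oplus m_j(k)}$ (up to shifts and Tate twists) and passing to a regular semisimple fiber, the Euler characteristic becomes $\sum_j m_j(k)\binom{2g+1}{j}$ since $\rk\mathcal{L}_j=\binom{2g+1}{j}$; a parity/purity argument then identifies this alternating sum with the total sum, reducing the problem to a Reeder-style multiplicity count in the resolution $v_k$.
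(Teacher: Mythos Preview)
Your reduction of the Hochschild claim to the Euler-characteristic claim via \cref{proposition:hodge-tate} and HKR is exactly what the paper does, and your starting expression $\chi=\sum_{i}\sum_{j}M(i+1,j,k)\binom{2g+1}{j}$ is the right one. The gap is precisely where you locate it: you do not actually evaluate this sum, and the two routes you sketch (Schubert calculus on $\Gr(k+1,V)$, or a Reeder-style multiplicity count through the strict support decomposition) are both more elaborate than what is needed.

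The paper's evaluation is elementary once one sees two points. First, for fixed $j$ the inner sum $\sum_i M(i+1,j,k)$ runs over all coefficients of the Gaussian binomial $\binom{2g-k-j-1}{k+1-j}_q$; by \eqref{equation:q-binomial-limit} this is just the ordinary binomial $\binom{2g-k-j-1}{k+1-j}=\binom{2g-k-j-1}{2(g-k-1)}$, so
\[
  \chi(\fano_k(Q_1\cap Q_2))=\sum_{j=0}^{k+1}\binom{2g+1}{j}\binom{2g-k-j-1}{2(g-k-1)}.
\]
Second, this single sum is exactly the right-hand side of the identity in \cref{lemma:ira-gessel} with $m=k+1$ and $a=2(g-k-1)$, whose proof is a short generating-function argument using \eqref{equation:standard-power-series} and \eqref{equation:gfology-power-series}. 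No Springer theory or Chern-class integration is required beyond what is already encoded in \cref{proposition:hodge-tate}.
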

In \cite[\S3]{MR4673249} this is proven for~$k=g-2$ using a sequence of anti-flips
obtained by wall-crossing for the moduli space of quasiparabolic bundles.
We are not aware of a similar wall-crossing picture for~$\fano_k(Q_1\cap Q_2)$ when~$k\neq g-2$,
so we will rather build and expand upon the main result of \cite{MR4216588},
which leads to \cref{proposition:hodge-tate}.

\paragraph{A combinatorial identity}
We will use the following combinatorial lemma,
whose statement was suggested in \cite{ira-gessel}.
\begin{lemma}
  \label{lemma:ira-gessel}
  For all integers~$m\geq 0$ and~$a\geq 0$ even
  we have that
  \begin{equation}
    \label{equation:ira-gessel}
    4^m\binom{m+a/2}{m}
    =
    \sum_{i=0}^m\binom{m+a-i}{a}\binom{2m+a+1}{i}.
  \end{equation}
\end{lemma}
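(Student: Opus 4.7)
The plan is to view both sides of \eqref{equation:ira-gessel} as coefficient extractions in formal power series and to connect them via the classical substitution $w=x/(1+x)^2$. Throughout I write $a=2b$.

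First I would rewrite the right-hand side as a coefficient extraction. Using $\binom{m+a-i}{a}=\binom{m+a-i}{m-i}$ and reindexing by $j=m-i$, the sum becomes $\sum_{j=0}^m\binom{j+a}{j}\binom{2m+a+1}{m-j}$, which, by the Cauchy product formula and the identity $(1-x)^{-(a+1)}=\sum_{j\geq 0}\binom{j+a}{j}x^j$, equals
\[
[x^m]\frac{(1+x)^{2m+a+1}}{(1-x)^{a+1}}.
\]

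The heart of the argument is the substitution $w=x/(1+x)^2$. The algebraic identity $(1+x)^2-4x=(1-x)^2$ implies $1-4w=(1-x)^2/(1+x)^2$, so that $(1-4w)^{-(b+1)}=((1+x)/(1-x))^{a+2}$ as formal power series. A routine differentiation gives $dw=((1-x)/(1+x)^3)\,dx$ and $w^{-(m+1)}=(1+x)^{2m+2}/x^{m+1}$, and combining these yields
\[
(1-4w)^{-(b+1)}\,\frac{dw}{w^{m+1}}=\frac{(1+x)^{2m+a+1}}{(1-x)^{a+1}}\,\frac{dx}{x^{m+1}}.
\]
Equating residues at the origin transforms the $w$-side into an $x$-side coefficient extraction:
\[
[w^m](1-4w)^{-(b+1)}=[x^m]\frac{(1+x)^{2m+a+1}}{(1-x)^{a+1}}.
\]

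The left-hand side here is immediate from the negative binomial theorem: $[w^m](1-4w)^{-(b+1)}=4^m\binom{m+b}{m}=4^m\binom{m+a/2}{m}$, matching the left-hand side of \eqref{equation:ira-gessel}. The only non-routine step is spotting the substitution $w=x/(1+x)^2$ together with the discriminant-type identity $(1+x)^2-4x=(1-x)^2$; once these are in hand, the rest is a formal manipulation of power series and the binomial series for $(1-4w)^{-(b+1)}$.
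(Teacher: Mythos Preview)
Your proof is correct and takes a route that differs in execution from the paper's, though the two are related under the hood. Both arguments begin with the same reindexing $j=m-i$ and both pass to formal power series. The paper then sums over $m$: it identifies $\sum_{m\geq 0}4^m\binom{m+a/2}{m}x^m=(1-4x)^{-(a/2+1)}$ directly, and computes $\sum_{m\geq 0}(\text{RHS})\,x^m$ by invoking the identity
\[
\frac{1}{\sqrt{1-4x}}\left(\frac{1-\sqrt{1-4x}}{2x}\right)^{j}=\sum_{i\geq 0}\binom{2i+j}{i}x^{i}
\]
from Wilf's \emph{generatingfunctionology}, after which a short algebraic check shows the two functions coincide. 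You instead work one $m$ at a time: you recognise the right-hand side as $[x^m]\,(1+x)^{2m+a+1}/(1-x)^{a+1}$ via the Cauchy product, and then the substitution $w=x/(1+x)^2$ together with invariance of residues (a Lagrange-inversion-style step) collapses this to $[w^m](1-4w)^{-(b+1)}$. The approaches are cousins—your substitution is essentially the compositional inverse of the Catalan-type series $\tfrac{1-\sqrt{1-4x}}{2x}$ appearing in the paper's quoted identity—but your argument is more self-contained, needing no external generating-function formula, while the paper's version makes the connection to the Catalan generating function explicit.
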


For completeness' sake we provide a proof of this statement,
using the method to prove a similar statement from \cite{1248154}.
\begin{proof}
  The proof strategy is via an application of formal power series.
  We apply the following identities of formal power series, the first of which is standard,
  and the second of which can be found in \cite[page~54]{MR2172781}:
  \begin{align}
    \label{equation:standard-power-series} \frac{1}{(1-x)^{n+1}} &= \sum_{i=0}^\infty \binom{n+i}{i}x^i \\
    \label{equation:gfology-power-series}\frac{1}{\sqrt{1-4x}}\left(\frac{1-\sqrt{1-4x}}{2x}\right)^j &= \sum_{i=0}^\infty \binom{2i+j}{i}x^i.
  \end{align}
  Note first that by switching the order of summation,
  the right side of \eqref{equation:ira-gessel} may be written as
  \begin{equation}
    \sum_{i=0}^m\binom{a+i}{a}\binom{2m+a+1}{m-i}.
  \end{equation}
  Now, to show the identity, it suffices to show the equality of the formal power series
  \begin{equation}
    F(x) = \sum_{m=0}^\infty 4^m\binom{m+a/2}{m}x^m, \qquad \text{ and }\qquad G(x) = \sum_{m=0}^\infty \sum_{i=0}^m\binom{a+i}{a}\binom{2m+a+1}{m-i}x^m,
  \end{equation}
  since this implies that all of their coefficients must agree.
  \Cref{equation:standard-power-series} immediately implies that
  \begin{equation}
    F(x) = \frac{1}{(1-4x)^{a/2+1}}.
  \end{equation}
  For the second power series, we switch the order of summation and reindex the sum by~$j=m-i$:
  \begin{align}
    G(x) &= \sum_{i=0}^\infty  \sum_{m=i}^\infty \binom{a+i}{a}\binom{2m+a+1}{m-i}x^m \\
    &=\sum_{i=0}^\infty \binom{a+i}{a}x^i \sum_{j=0}^\infty \binom{2j+2i+a+1}{j}x^j.
  \end{align}
  Then an application of \eqref{equation:gfology-power-series}, followed by an application of \eqref{equation:standard-power-series} gives
  \begin{align}
    G(x) &= \sum_{i=0}^\infty \binom{a+i}{a}x^i\cdot \frac{1}{\sqrt{1-4x}}\left(\frac{1-\sqrt{1-4x}}{2x}\right)^{2i+a+1} \\
    &= \frac{1}{\sqrt{1-4x}}\left(\frac{1-\sqrt{1-4x}}{2x}\right)^{a+1}\cdot\sum_{i=0}^\infty \binom{a+i}{a}x^i\cdot\left(\frac{1-\sqrt{1-4x}}{2x}\right)^{2i} \\
    &= \frac{1}{\sqrt{1-4x}}\cdot\left(\frac{1-\sqrt{1-4x}}{2x}\right)^{a+1}\cdot \frac{1}{\left(1-\left(\frac{1-\sqrt{1-4x}}{2x}\right)^2x\right)^{a+1}}.
  \end{align}
  It is then a straightforward exercise in algebra to verify that
  \begin{equation}
    G(x) = \frac{1}{(1-4x)^{a/2+1}} = F(x).
  \end{equation}
\end{proof}

\paragraph{Euler characteristic calculation}
For the proof of \cref{proposition:euler-characteristic-even} we need to recall some facts about Gaussian binomial coefficients.
We write
\begin{equation}
  \binom{m}{n}_q = \frac{\prod_{l=m-n+1}^m (1-q^l)}{\prod_{l=1}^n (1-q^l)}\in\mathbb{Z}[q]
\end{equation}
for the Gaussian binomial coefficient in the variable~$q$.
We will write~$[q^r] \binom{m}{n}_q$ for the coefficient of~$q^r$ in~$\binom{m}{n}_q$.
The key facts we need are that
\begin{equation}
  \label{equation:q-binomial-symmetry}
  \binom{m}{n}_q =  \binom{m}{m-n}_q
\end{equation}
and
\begin{equation}
  \label{equation:q-binomial-limit}
  \lim_{q\to 1} \binom{m}{n}_q = \binom{m}{n}.
\end{equation}

We are now ready to prove the identity of Euler characteristics claimed above.

\begin{proof}[Proof of \cref{proposition:euler-characteristic-even}]
  By \cref{proposition:hodge-tate},
  and reinterpreting the multiplicities in that statement using Gaussian binomial coefficients,
  we have that
  \begin{equation}
    \mathrm{b}_{2p}(\fano_k(Q_1\cap Q_2))
    =
    \sum_{j=0}^{k+1} \binom{2g+1}{j}\cdot[q^{p-j(g-k-1)}]\binom{2g-k-j-1}{k+1-j}_q.
  \end{equation}
  Thus we have that
  \begin{equation}
    \begin{aligned}
      \chi(\fano_k(Q_1\cap Q_2))
      &=\sum_{p=0}^{\dim\fano_k(Q_1\cap Q_2)}\mathrm{b}_{2p}(\fano_k(Q_1\cap Q_2)) \\
      &=\sum_{p=0}^{\dim\fano_k(Q_1\cap Q_2)}\sum_{j=0}^{k+1}\binom{2g+1}{j}\cdot[q^{p-j(n-k-1)}]\binom{2g-k-j-1}{k+1-j}_q \\
      &=\sum_{j=0}^{k+1}\binom{2g+1}{j}\sum_{p=0}^{\dim\fano_k(Q_1\cap Q_2)}[q^{p-j(n-k-1)}]\binom{2g-k-j-1}{2(g-k-1)}_q.
    \end{aligned}
  \end{equation}
  By \cref{proposition:fano-scheme-even-dimension} we have $\dim\fano_k(Q_1\cap Q_2) = 2(k+1)(g-k-1)$.
  We also have
  \begin{equation}
    \deg_q\binom{2g-k-j-1}{2(g-k-1)}_q=2(k+1-j)(g-k-1).
  \end{equation}
  Thus, the inner sum runs through all the non-zero coefficients of $\smash{\binom{2g-k-j-1}{2(g-k-1)}_q}$.
  Now, using \eqref{equation:q-binomial-symmetry}, \eqref{equation:q-binomial-limit} and~\cref{lemma:ira-gessel}
  with $m=k+1$ and $a=2(g-k-1)$ we obtain that
  \begin{equation}
    \chi(\fano_k(Q_1\cap Q_2))
    =\sum_{j=0}^{k+1} \binom{2g+1}{j} \binom{2g-k-j-1}{2(g-k-1)}
    =4^{k+1} \binom{g}{k+1}.
  \end{equation}
  Because the Hodge diamond of~$\fano_k(Q_1\cap Q_2)$ is concentrated on the vertical axis,
  we have as in \cref{remark:comment-on-euler-characteristic}
  that
  \begin{equation}
    \chi(\fano_k(Q_1\cap Q_2))
    =
    \dim\HHHH_\bullet(\fano_k(Q_1\cap Q_2))
    =
    \dim\HHHH_0(\fano_k(Q_1\cap Q_2)).
  \end{equation}
\end{proof}

\begin{remark}
  For~$k=g-1$ we have that~$\fano_{g-1}(Q_1\cap Q_2)$
  is a reduced and finite scheme of cardinality~$2^{2g}$ \cite[Theorem~3.8]{reid-thesis},
  thus the formula \eqref{equation:euler-characteristic-fano-scheme} still holds in this degenerate case.
\end{remark}

\subsection{The ``stacky symmetric power''}
\label{subsection:stacky-symmetric-power}

We introduce the following notation.
Associated to~$p\in\mathbb{P}^1$
we have by projective duality an associated~$p^\vee\in\mathbb{P}^{1,\vee}\cong\mathbb{P}^1$.
The~$k$th Veronese embedding for~$\mathbb{P}^{1,\vee}$ gives a closed immersion~$\mathbb{P}^{1,\vee}\hookrightarrow\mathbb{P}^{k,\vee}$.
The image of~$p^\vee$ gives a point~$h^\vee(p)\in\mathbb{P}^{k,\vee}$
which corresponds to a hyperplane~$H(p)\subset\mathbb{P}^k$.

The image of the Veronese embedding is the rational normal curve,
hence the points~$p_i^\vee$ are in general position,
meaning that the hyperplanes~$H(p_i)$ form a generalized snc divisor:
for all~$I\subset\{1,\ldots,2g+1\}$ and~$j\in I$ the inclusion
\begin{equation}
  \bigcap_{i\in I}H(p_i)\hookrightarrow\bigcap_{i\in I\setminus\{j\}}H(p_i)
\end{equation}
is an effective Cartier divisor.

With this notation,
Fonarev gives the following ad hoc definition of a stacky symmetric power \cite[\S2]{MR4673249}.
\begin{construction}
  \label{construction:fonarev}
  Let~$\mathcal{C}$ be the stacky curve~$\sqrt[2]{\mathbb{P}^1,p_1+\dots+p_{2g+1}}$
  attached to~$Q_1\cap Q_2\subset\mathbb{P}^{2g}$.
  We define its \emph{``stacky symmetric power''} as the iterated root stack
  \begin{equation}
    \fonarev{k}
    \colonequals
    \sqrt[2]{\mathbb{P}^k,H(p_1)+\dots+H(p_{2g+1})},
  \end{equation}
  i.e.,
  the iterated fiber product of the root stacks~$\sqrt[2]{\mathbb{P}^k,H(p_i)}$ over~$\mathbb{P}^k$.
\end{construction}

\begin{remark}
  It would be interesting to find a general moduli-theoretic interpretation of~$\fonarev{k}$,
  which reduces to the symmetric power of a curve if there were no stacky points,
  and which works for an arbitrary stacky curve.
  We are not aware of such an interpretation.
\end{remark}

Let us recall the following lemma, which is proven in \cite[\S3]{MR4673249}
using the semiorthogonal decomposition of a root stack obtained in \cite[Theorem~1.6]{MR3436544}
and \cite[Theorem~4.7]{MR3573964}.
\begin{lemma}
  \label{lemma:fonarev}
  Let~$Q_1\cap Q_2\subset\mathbb{P}^{2g}$ be a smooth intersection of smooth quadrics.
  Let~$\mathcal{C}$ be the associated stacky curve.
  Then~$\derived^\bounded(\fonarev{k})$
  admits a full exceptional collection of length
  \begin{equation}
    \rk\Kzero(\derived^\bounded(\fonarev{k})) =\sum_{t=0}^k(k+1-t)\binom{2g+1}{t},
  \end{equation}
  where~$\fonarev{k}$ is as in \cref{construction:fonarev}.
\end{lemma}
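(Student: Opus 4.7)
The plan is to apply the semiorthogonal decomposition for iterated root stacks to the definition
$\fonarev{k}=\sqrt[2]{\mathbb{P}^k,H(p_1)+\dots+H(p_{2g+1})}$, and then assemble Beilinson's exceptional collections on each intersection stratum into a full exceptional collection on $\fonarev{k}$.

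First, I would check that the hyperplanes $H(p_1),\dots,H(p_{2g+1})$ form a generalized snc divisor on $\mathbb{P}^k$. This is immediate from the paragraph preceding \cref{construction:fonarev}: the dual points $p_i^\vee$ lie on the image of the rational normal curve in $\mathbb{P}^{k,\vee}$, so any $k+1$ of them are projectively independent. Dually, for $I\subset\{1,\dots,2g+1\}$ of size $t$, the intersection
\begin{equation}
  D_I \colonequals \bigcap_{i\in I}H(p_i)
\end{equation}
is transverse and isomorphic to $\mathbb{P}^{k-t}$ when $t\leq k$, and is empty when $t\geq k+1$.

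Second, I would invoke the root-stack semiorthogonal decomposition of \cite[Theorem~1.6]{MR3436544} and \cite[Theorem~4.7]{MR3573964}. For a single smooth divisor $D\subset X$ the square root stack decomposes as $\derived^\bounded(\sqrt[2]{X,D})=\langle\derived^\bounded(X),\derived^\bounded(D)\rangle$, and iterating this along the $2g+1$ hyperplanes (as is done in \cite[\S3]{MR4673249}) produces a semiorthogonal decomposition
\begin{equation}
  \derived^\bounded(\fonarev{k})
  =
  \Bigl\langle \derived^\bounded(D_I)\ :\ I\subset\{1,\dots,2g+1\}\Bigr\rangle,
\end{equation}
in which each subset $I$ contributes exactly once. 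The components with $|I|\geq k+1$ are trivial, so this reduces to $\binom{2g+1}{t}$ copies of $\derived^\bounded(\mathbb{P}^{k-t})$ for each $0\leq t\leq k$. Inserting Beilinson's exceptional collection $\langle\mathcal{O},\mathcal{O}(1),\dots,\mathcal{O}(k-t)\rangle$ on each $\mathbb{P}^{k-t}$ yields a full exceptional collection on $\fonarev{k}$ of length
\begin{equation}
  \sum_{t=0}^{k}\binom{2g+1}{t}(k+1-t),
\end{equation}
which is the claimed number. Since the classes of exceptional objects form a $\mathbb{Z}$-basis of $\Kzero$, this number equals $\rk\Kzero(\derived^\bounded(\fonarev{k}))$.

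The main technical point to get right is the iteration step: one must ensure that at each stage the pullback of a remaining hyperplane $H(p_j)$ to the partial root stack is still a smooth effective Cartier divisor to which the next square-root construction can be applied. The generalized snc hypothesis, together with the compatibility of root-stack formation with transverse base change, guarantees this, and this is exactly the context covered by the iterated form of \cite[Theorem~4.7]{MR3573964}. Once this is in place, the computation of the rank is a direct bookkeeping of the Beilinson collections on each stratum.
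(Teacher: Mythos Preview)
Your proposal is correct and follows exactly the approach the paper indicates: the paper does not give its own proof but simply cites \cite[\S3]{MR4673249}, noting that the argument uses the root-stack semiorthogonal decompositions of \cite[Theorem~1.6]{MR3436544} and \cite[Theorem~4.7]{MR3573964}. Your write-up is a faithful unpacking of that argument---iterating the root-stack decomposition along the generalized snc arrangement, identifying the strata $D_I\cong\mathbb{P}^{k-t}$, and counting the Beilinson exceptional objects---and the technical caveat you flag about the iteration step is precisely what the generalized snc hypothesis (recorded just before \cref{construction:fonarev}) is there to guarantee.
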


\subsection{Evidence for \texorpdfstring{\cref{conjecture:stacky}}{Conjecture \ref{conjecture:stacky}}}
\label{subsection:evidence-stacky}
To give evidence for \cref{conjecture:stacky}
we will compute the number of exceptional objects
in the right-hand side of \eqref{equation:stacky-conjecture},
and compare it to the zeroth Hochschild homology of~$\fano_k(Q_1\cap Q_2)$
as computed in \cref{proposition:euler-characteristic-even}.
The following generalises \cite[\S3]{MR4673249}
from~$k=g-2$ to all~$k=0,\ldots,g-2$.
\begin{proposition}
  \label{proposition:number-of-objects-rhs}
  The right-hand side of \eqref{equation:stacky-conjecture} in \cref{conjecture:stacky}
  admits a full exceptional collection
  of length~$\binom{g}{k+1}4^{k+1}$.
\end{proposition}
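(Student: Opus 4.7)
The plan is to count the total length of the exceptional collection obtained by concatenating, along the semiorthogonal decomposition \eqref{equation:stacky-conjecture}, the full exceptional collections on each $\derived^\bounded(\Sym^i\mathcal{C})$ provided by \cref{lemma:fonarev}, and to reduce the resulting combinatorial identity to \cref{lemma:ira-gessel}.

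By \cref{lemma:fonarev} each $\derived^\bounded(\Sym^i\mathcal{C}) = \derived^\bounded(\fonarev{i})$ admits a full exceptional collection of length $\sum_{t=0}^i(i+1-t)\binom{2g+1}{t}$, so the total length of the exceptional collection on the right-hand side of \eqref{equation:stacky-conjecture} is
\begin{equation}
N = \sum_{i=0}^{k+1}\binom{2g-3-k-i}{k+1-i}\sum_{t=0}^{i}(i+1-t)\binom{2g+1}{t}.
\end{equation}
Swapping the order of summation and writing $j = i-t$ transforms this into
\begin{equation}
N = \sum_{t=0}^{k+1}\binom{2g+1}{t}\sum_{j=0}^{k+1-t}(j+1)\binom{2g-3-k-t-j}{k+1-t-j}.
\end{equation}

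Reindexing the inner sum by $r = k+1-t-j$ and setting $m = k+1-t$, $a = 2g-4-2k$ turns it into $\sum_{r=0}^{m}(m+1-r)\binom{a+r}{r}$. Writing $m+1-r = \sum_{s=0}^{m-r}1$, swapping sums, and applying the hockey stick identity $\sum_{r=0}^{m}\binom{a+r}{r} = \binom{a+m+1}{m}$ twice evaluates this to $\binom{a+m+2}{m} = \binom{2g-1-k-t}{k+1-t}$. Applying the symmetry $\binom{2g-1-k-t}{k+1-t} = \binom{2g-1-k-t}{2(g-k-1)}$ then yields
\begin{equation}
N = \sum_{t=0}^{k+1}\binom{2g+1}{t}\binom{2g-1-k-t}{2(g-k-1)},
\end{equation}
which is precisely the right-hand side of the identity in \cref{lemma:ira-gessel} with $m=k+1$ and $a=2(g-k-1)$, evaluating to $4^{k+1}\binom{g}{k+1}$.

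The only obstacle is the bookkeeping of indices through the two summation manipulations; once the inner double sum collapses to a single binomial coefficient via the hockey stick identity, the statement reduces cleanly to \cref{lemma:ira-gessel}, and the resulting value of $N$ automatically matches the zeroth Hochschild homology computed in \cref{proposition:euler-characteristic-even}, in agreement with the prediction of \cref{conjecture:stacky}.
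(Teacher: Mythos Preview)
Your proof is correct and follows essentially the same strategy as the paper: compute the total length using \cref{lemma:fonarev}, reduce the resulting double sum to the single sum $\sum_{t=0}^{k+1}\binom{2g+1}{t}\binom{2g-1-k-t}{2(g-k-1)}$, and then invoke \cref{lemma:ira-gessel} with $m=k+1$, $a=2(g-k-1)$. The only cosmetic difference is in the bookkeeping step: the paper goes in the opposite direction, starting from \cref{lemma:ira-gessel} and expanding $\binom{2g-k-1-t}{2g-2k-2}$ via a Chu--Vandermonde-type identity $\sum_{m=0}^n\binom{m}{j}\binom{n-m}{k-j}=\binom{n+1}{k+1}$ (with $j=1$) to recover the double sum, whereas you collapse the double sum directly using two applications of the hockey stick identity; the two manipulations are inverses of each other.
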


\begin{proof}[Proof of \cref{proposition:number-of-objects-rhs}]
  By \cref{lemma:fonarev}
  the right-hand side of \eqref{equation:stacky-conjecture}
  is a semiorthogonal decomposition where each component has a full exceptional collection,
  thus it admits itself a full exceptional collection.
  It remains to show that
  for all~$g\geq 2$ and~$k\leq g-2$ its total length is
  \begin{equation}
    \label{equation:stacky-rhs-goal}
    \binom{g}{k+1}4^{k+1}
    =
    \sum_{i=0}^{k+1}
    \sum_{t=0}^i
    \binom{2g-3-k-i}{k+1-i}(i+1-t)\binom{2g+1}{t}.
  \end{equation}
  Taking~$m=k+1$ and~$a=2g-2k-2$ in \cref{lemma:ira-gessel} we obtain
  \begin{equation}
    4^{k+1}\binom{g}{k+1}
    =
    \sum_{t=0}^{k+1}\binom{2g-k-1-t}{2g-2k-2}\binom{2g+1}{t}.
  \end{equation}
  Using the variant of the Chu--Vandermonde identity which reads
  \begin{equation}
    \sum_{m=0}^n \binom{m}{j}\binom{n-m}{k-j}
    =
    \binom{n+1}{k+1}
  \end{equation}
  for~$0\leq j\leq k\leq n$ we can write
  \begin{equation}
    \binom{2g-k-1-t}{2g-2k-2}
    =
    \sum_{\alpha=0}^{2g-k-t}\binom{\alpha}{1}\binom{2g-k-2-t-\alpha}{2g-2k-4}.
  \end{equation}
  The term~$\binom{\alpha}{1}$ vanishes for~$\alpha=0$,
  so we may index the sum starting from~$\alpha=1$.
  Now if we reindex the summation by taking~$i=\alpha+t-1$, we find
  \begin{equation}
    \begin{aligned}
      \binom{2g-k-1-t}{2g-2k-2}
      &=\sum_{i=t}^{2g-k-1}\binom{i+1-t}{1}\binom{2g-3-k-i}{2g-2k-4} \\
      &=\sum_{i=t}^{2g-k-1}\binom{i+1-t}{1}\binom{2g-3-k-i}{k+1-i} \\
      &=\sum_{i=t}^{k+1}\binom{i+1-t}{1}\binom{2g-3-k-i}{k+1-i}
    \end{aligned}
  \end{equation}
  where the last equality follows since the second term in the product vanishes for~$i > k + 1$.
  We therefore have the equality
  \begin{equation}
    \begin{aligned}
      \binom{g}{k+1}4^{k+1}
      &=\sum_{t=0}^{k+1}\sum_{i=t}^{k+1}\binom{2g-3-k-i}{k+1-i}(i+1-t) \\
      &=\sum_{i=0}^{k+1}\sum_{t=0}^{i}\binom{2g-3-k-i}{k+1-i}(i+1-t),
    \end{aligned}
  \end{equation}
  as in \eqref{equation:stacky-rhs-goal}.
\end{proof}

It thus agrees with \cref{proposition:euler-characteristic-even}.

\appendix

\section{Code to verify \texorpdfstring{\cref{conjecture:hyperelliptic-refined}}{Conjecture \ref{conjecture:hyperelliptic-refined}} after applying the E-polynomial motivic measure}
The following SageMath code uses \cite{hodge-diamond-cutter},
commit \texttt{1950acc},
and is tested with SageMath~10.2.
It is available at \url{https://github.com/pbelmans/decomposing-fano-schemes},
which also gives a GitHub Action that automatically runs the code.
\label{appendix:code}
\inputminted[fontsize=\small]{sage}{code/paper.sage}

\printbibliography

\emph{Pieter Belmans}, \url{pieter.belmans@uni.lu} \\
Department of Mathematics, Universit\'e de Luxembourg, 6, avenue de la Fonte, L-4364 Esch-sur-Alzette, Luxembourg

\emph{Jishnu Bose}, \url{jishnubo@usc.edu} \\
Department of Mathematics, University of Southern California, 3620 Vermont Ave, Los Angeles, CA 90089, United States

\emph{Sarah Frei}, \url{sarah.frei@dartmouth.edu} \\
Department of Mathematics, Dartmouth College, Hanover, NH 03755, United States

\emph{Ben Gould}, \url{brgould@umich.edu} \\
Department of Mathematics, University of Michigan -- Ann Arbor, 530 Church St, Ann Arbor, MI 48104, United States

\emph{James Hotchkiss} \url{james.hotchkiss@columbia.edu} \\
Department of Mathematics, Columbia University, 2990 Broadway, New York, NY 10027, United States

\emph{Alicia Lamarche}, \url{lamarche@math.utah.edu} \\
Department of Mathematics, University of Utah, 155 South 1400 East JWB 233, Salt Lake City, UT 84112, United States

\emph{Jack Petok}, \url{jack.petok@dartmouth.edu} \\
Department of Mathematics, Dartmouth College, Hanover, NH 03755, United States

\emph{Cristian Rodriguez Avila}, \url{rodriguez@math.umass.edu} \\
Department of Mathematics and Statistics, University of Massachusetts, 710 N. Pleasant Street, Amherst, MA 01003, United States

\emph{Saket Shah}, \url{sakets@umich.edu} \\
Department of Mathematics, University of Michigan -- Ann Arbor, 530 Church St, Ann Arbor, MI 48104, United States

\end{document}